\def\frk{\frak}               
\def\Phi{{\frk n}}
\def\Phi{{\frk N}}
\def\opn#1#2{\def#1{\operatorname{#2}}} 
\opn\chara{char} \opn\length{\ell} \opn\pd{pd} \opn\rk{rk}
\opn\projdim{proj\,dim} \opn\injdim{inj\,dim} \opn\rank{rank}
\opn\depth{depth} \opn\grade{grade} \opn\height{height}
\opn\embdim{emb\,dim} \opn\codim{codim}
\opn\Tr{Tr} \opn\bigrank{big\,rank}
\opn\superheight{superheight}\opn\lcm{lcm}
\opn\trdeg{tr\,deg}
\opn\reg{reg} \opn\lreg{lreg} \opn\ini{in} \opn\lpd{lpd}
\opn\size{size}\opn\bigsize{bigsize}
\opn\cosize{cosize}\opn\bigcosize{bigcosize}
\opn\sdepth{sdepth}\opn\sreg{sreg}
\opn\link{link}\opn\fdepth{fdepth}
\opn\index{index}
\opn\index{index}
\opn\indeg{indeg}
\opn\N{N}
\opn\mult{mult}
\opn\SSC{SSC}
\opn\SC{SC}
\opn\lk{lk}
\opn\HS{HS}
\opn\div{div} \opn\Div{Div} \opn\cl{cl} \opn\Cl{Cl}
\opn\Spec{Spec} \opn\Supp{Supp} \opn\supp{supp} \opn\Sing{Sing}
\opn\Ass{Ass} \opn\Min{Min}\opn\Mon{Mon} \opn\dstab{dstab} \opn\astab{astab}
\opn\Syz{Syz}
\opn\reg{reg}
\opn\Ann{Ann} \opn\Rad{Rad} \opn\Soc{Soc}
\opn\Im{Im} \opn\Ker{Ker} \opn\Coker{Coker} \opn\Am{Am}
\opn\Hom{Hom} \opn\Tor{Tor} \opn\Ext{Ext} \opn\End{End}\opn\Der{Der}
\opn\Aut{Aut} \opn\id{id}
\opn\nat{nat}
\opn\pff{pf}
\opn\Pf{Pf} \opn\GL{GL} \opn\SL{SL} \opn\mod{mod} \opn\ord{ord}
\opn\Gin{Gin} \opn\Hilb{Hilb}\opn\sort{sort}
\opn\initial{init}
\opn\ende{end}
\opn\height{height}
\opn\type{type}
\opn\aff{aff} \opn\con{conv} \opn\relint{relint} \opn\st{st}
\opn\lk{lk} \opn\cn{cn} \opn\core{core} \opn\vol{vol}
\opn\link{link} \opn\Link{Link}\opn\lex{lex}
\opn\gr{gr}
\def\pot#1#2{#1[\kern-0.28ex[#2]\kern-0.28ex]}
\opn\dirlim{\underrightarrow{\lim}}
\opn\inivlim{\underleftarrow{\lim}}
\let\To=\longrightarrow
\def\Implies{\ifmmode\Longrightarrow \else
        \unskip${}\Longrightarrow{}$\ignorespaces\fi}
\def\implies{\ifmmode\Rightarrow \else
        \unskip${}\Rightarrow{}$\ignorespaces\fi}
\def\iff{\ifmmode\Longleftrightarrow \else
        \unskip${}\Longleftrightarrow{}$\ignorespaces\fi}
\newtheorem{Theorem}{Theorem}[section]
 \newtheorem{Lemma}[Theorem]{Lemma}
 \newtheorem{Corollary}[Theorem]{Corollary}
 \newtheorem{Proposition}[Theorem]{Proposition}
 \newtheorem{Remark}[Theorem]{Remark}
 \newtheorem{Definition}[Theorem]{Definition}
\newtheorem{Notation}[Theorem]{Notation}
\let\epsilon\varepsilon
\let\kappa=\varkappa
\def\qed{\ifhmode\textqed\fi
      \ifmmode\ifinner\quad\qedsymbol\else\dispqed\fi\fi}
\def\textqed{\unskip\nobreak\penalty50
       \hskip2em\hbox{}\nobreak\hfil\qedsymbol
       \parfillskip=0pt \finalhyphendemerits=0}
\def\dispqed{\rlap{\qquad\qedsymbol}}
\opn\dis{dis}
\def\pnt{{\raise0.5mm\hbox{\large\bf.}}}
\opn\Lex{Lex}
\def\F{{\mathcal F}}
\opn\Spec{Spec} \opn\Supp{Supp} \opn\supp{supp}
 \opn\Ass{Ass}
 \opn\p{Ass}
 \opn\min{min}
 \opn\max{max}
 \opn\MIN{Min}
 \opn\p{\mathfrak{p}}
\opn\Deg{Deg}
\begin{document}

 \title{Multiplicity  of powers of path ideals of a line graph}

\author {Jiawen Shan, Zexin Wang, Dancheng Lu}
\address{School of Mathematical Science,  Soochow University, 215006 Suzhou, P.R.China}
\email{ShanJiawen826@outlook.com}

\address{School  of Mathematical Science, Soochow University, 215006 Suzhou, P.R.China}
\email{zexinwang6@outlook.com}

\address{School of Mathematical Science,  Soochow University, 215006 Suzhou, P.R.China}
\email{ludancheng@suda.edu.cn}

\dedicatory{Dedicated to the memory of Professor J\"urgen Herzog }

 \begin{abstract} Let $S=\mathbb{K}[x_1,\ldots,x_n]$ and let $I$ be the $t$-path ideal  of the line graph  $L_n$ with $n$-vertices.  It is shown  that the set of associated prime ideals of $I^s$ is equal to the set of minimal prime ideals of $I$ for all $s\geq 1$, and we provide an explicit description of these prime ideals.  Additionally, as the main contribution of this paper, we derive an   explicit formula for the multiplicity of  $S/I^s$ for all $s\geq 1$, revealing that it is a polynomial in $s$ from the  beginning.
 \end{abstract}
\keywords{path ideals, line graph, multiplicity, normally torsion-free}

\subjclass[2010]{Primary  13C70, 13H10 Secondary 05E40}

 \maketitle

\section{Introduction}

 Let $G$ be a finite simple graph with  vertex set $V=\{x_1, \ldots, x_n\}$ and  edge set $E$. The ideal generated by all quadratic monomials $x_ix_j$ such that $\{x_i,x_j\}\in E$ is called the {\it edge ideal} of $G$, denoted by $I(G)$. The edge ideal was introduced and studied by Villarreal in \cite{Vil90}. Since then, the study of edge ideals of simple graphs, particularly the properties of their powers, has become a vibrant  topic in combinatorial commutative algebra. For more detailed  information in this direction, we refer to the exposition paper \cite{BBH17} and the references therein.

 In 1999, Conca and De Negri introduced in their work \cite{CD99} the concept of a $t$-path ideal, which serves as a generalization of an edge ideal.
 For an integer $2\leq t\leq n$, a {\it  $t$-path} of the graph $G$ is a sequence of $(t-1)$ distinct edges $\{x_{i_1}, x_{i_2}\}, \{x_{i_2}, x_{i_3}\},\cdots,$ $ \{x_{i_{t-1}}, x_{i_t}\}$ of $G$ such that the vertices $x_{i_1}, x_{i_2}, \ldots, x_{i_{t}}$ are pairwise distinct. Such a path is also denoted by $\{x_{i_1}, \ldots, x_{i_t}\}$  for short. The {\it  $t$-path ideal} $I_t(G)$ associated to $G$ is defined as  the square-free monomial ideal $$I_t(G)=(x_{i_1}\cdots x_{i_t}\:\; \{x_{i_1}, \ldots, x_{i_t}\} \textrm{~~is~~ a}~~t\textrm{-path~~ of}~~ G)$$ in the polynomial ring $\mathbb{K}[x_1,\ldots,x_n]$. In the recent years, some algebraic properties of path ideals have been investigated extensively, see for instance \cite{AF18,BHO11,BHO12,HV10,KO14,KS14}.

However, very  little is known about the powers of $t$-path ideals for $t\geq 3$. In this paper, we will specifically concentrate on the powers of path ideals of {\it line graphs}.  A {\it line graph} (or {\it path graph}) of length $(n-1)$, denote by $L_n$, is a graph with vertex set $\{x_1, \ldots, x_n\}$ and  edge set  $\{\{x_j,x_{j+1}\}\:\; j=1,\ldots, n-1\}$.  Recently, ${\rm B\breve{a}l\breve{a}nescu}$-${\rm Cimpoea\textrm{\c{s}}}$ \cite{BC23} derived an explicit formula for the depth of powers of path ideals of line graphs. In \cite{SL23}, the regularity  function of powers of $I_t(L_n)$ is presented, revealing it is linear from the beginning.

 In this paper, we will study algebraic properties of powers of $I_t(L_n)$ from   two different angles: {\it associated prime ideals} of $I_t(L_n)^s$ and the {\it  multiplicities} of $I_t(L_n)^s$ for all $s\geq 1$.

 \subsection{Associated prime ideals}Let $S$ be a Noetherian ring and $M$ a finitely generated  non-zero $S$-module. A prime ideal $\p$ of $S$ is called an {\it associated prime ideal} of $M$ if there exists $x\in M$ such that $\p=(0:_Sx)$. The set of all associated prime ideal of $M$ is denoted by $\mathrm{Ass}_S(M)$. It is known that $\mathrm{Ass}_S(M)$ is a non-empty finite set. If $I$ is an ideal of $S$, we denote $\mathrm{Ass}_S(S/I)$ by $\mathrm{ass}(I)$. In 1979, Brodmann \cite{Bro79} proved the remarkable result that {\it there exists an integer  $s_0$ such that $\mathrm{ass}(I^s)= \mathrm{ass}(I^{s_0})$ for all $s\geq s_0$.} This result serves as a good example of the concept in commutative algebra that ideals exhibit favorable behavior  asymptotically.
Naturally, Brodmann's theorem also inspires several questions, such as determining the value of $s_0$ and determine the prime ideals in $\mathrm{ass}^{\infty}(I):=\mathrm{ass}(I^{s_0})$.  We refer to \cite{CHHT20} and the references therein for a comprehensive introduction to this topic.  If $s_0=1$, then  $I$ is said to be normally torsion-free.  Formally, we give the following definition following \cite{HH}.
\begin{Definition} \em An ideal $I$ of a Noetherian ring $S$ is said to be {\it normally torsion-free} if $\mathrm{ass}(I^s)= \mathrm{ass}(I)$ for all $s\geq 1$.
\end{Definition}
We will show that both $I_t(L_n)$ and $I_t(L_n)^{\vee}$ are normally torsion-free and determine the prime ideals in $\mathrm{ass}(I_t(L_n))$.

\subsection{Multiplicity} Let $S=\mathbb{K}[x_1,\ldots,x_n]$ be a  polynomial ring over a field $\mathbb{K}$ that is standardly graded  and let $M$ be a finitely generated graded $S$-module.
The Hilbert function $H_M$ of $M$ is defined as the function $$H_M(k):=\dim_{\mathbb{K}}M_k,$$ where $M_k$ is the degree $k$ component of $M$.
The Hilbert series $\HS(M,z)$ is defined to be $$\HS(M,z):=\sum_{k\in \mathbb{Z}}H_M(k)z^k.$$
If we assume $\dim M=d+1$, it is a celebrated result of David Hilbert  that $\HS(M,z)$ is a rational function of the following form $$\HS(M,z)=\frac{Q(M,z)}{(1-z)^{d+1}}.$$
Here, $Q(M,z)\in \mathbb{Q}[z^{-1},z]$ is  a Laurent polynomial such that $Q(M,1)\neq 0$. As  a consequence of this result, the Hilbert function $H_M$ is of polynomial type of degree $d$, meaning that there exists a polynomial $p_M(z)\in \mathbb{Q}[z]$ of degree $d$ such that $H_M(k)=p_M(k)$ for all $k\gg 0$. The polynomial $p_M(z)$ is referred to as the {\it Hilbert polynomial} of $M$.

\begin{Definition} \em Let $M$ be a finitely generated graded $S$-module of dimension $d+1$. The  Hilbert polynomial $p_M(z)$ of $M$ can be written as $$p_M(z)=\sum_{i=0}^d(-1)^ie_i(M)\binom{z+d-i}{d-i}.$$
The integer coefficients $e_i(M)$ for $i=0,\ldots,d$ are called the {\it Hilbert coefficients} of $M$. Among these, the first Hilbert coefficient $e_0(M)$  holds particular significance. It is also known as the {\it multiplicity} of $M$ and and is frequently denoted by  $\mult(M)$.
\end{Definition}
 Recall that a function $f:\mathbb{N}\rightarrow \mathbb{Q}$ is of {\it polynomial type} of degree $d$ if there exists a polynomial $p(z)\in \mathbb{Q}[z]$ of degree $d$ such that $f(k)=p(k)$ for all $k\gg 0$. Let $I$ be a graded ideal of $S$ such that $\dim M/IM=d$.
It was shown in \cite[Theorem 1.1]{HPV08} that {\it $e_i(M/I^sM)$ is of polynomial type in $s$ of degree $\leq n-d+i$ for $i=1,2,\ldots,d$}. To the best of our knowledge, there are currently no known non-trivial examples of graded ideals for which the multiplicity of their powers has been explicitly computed. In this paper, we will provide an explicit formula for the multiplicity of $S/I_t(L_n)^s$.

\subsection{Structure of this paper} In the paper, we always assume that $n,t$ are integers with $n\geq t\geq1$. The polynomial ring $\mathbb{K}[x_1,\ldots,x_n]$ will  be denoted by $S$. We use $L_n$ to denote the line graph with $n$ vertices and let $I_t(L_n)$ be the $t$-path ideal of the line graph $L_n$. This paper is organized as follows. In Section 2 we demonstrate  that both $I_t(L_n)$ and its Alexander dual $I_t(L_n)^{\vee}$ are normally torsion-free and provide a complete characterization of all the associated prime ideals of  $I_t(L_n)$.  The final section contains the main contribution of this paper, in which we present an   explicit formula for the multiplicity of  $S/I_t(L_n)^s$ for all $s\geq 1$, revealing that it is a polynomial in $s$ from the  beginning.

  We refer to the book \cite{HH} for the notation that is not explained in this paper.

\section{Associated primes of $I_t(L_n)^s$}

According to the definition of a $t$-path ideal,  the ideal $I_t(L_n)$ has a unique  minimal generating set $u_1,\ldots,u_{n-t+1}$, where $u_i=x_i\cdots x_{i+t-1}$ for $i=1,\ldots,n-t+1$.
In this section, we demonstrate  that both $I_t(L_n)$ and its Alexander dual $I_t(L_n)^{\vee}$ are normally torsion-free  and offer an explicit description of minimal prime ideals of $I_t(L_n)$. Furthermore, utilizing a novel method, we  compute the projective dimension of $S/I_t(L_n)$.

Recall that a simplicial complex on $[n]:=\{1,\ldots,n\}$ is a collection $\Delta$ of subsets of $[n]$ such that if $F\in \Delta$ and $G\subseteq F$, then $G\in \Delta$.  Any $F\in \Delta$ is a {\it face} of $\Delta$. A maximal face (with respect to inclusion) is a {\it facet} of $\Delta$. The set of all facets of $\Delta$ is denoted by $\mathcal{F}(\Delta)$. A simplicial complex $\Delta$ is denoted by $\langle F_1, F_2,\ldots, F_r\rangle$ if $ F_1, F_2,\ldots, F_r$ are all facets of $\Delta$.

For a subset $F$ of $[n]$, let $x_F$ denote the monomial $\prod_{i\in F}x_i$. Then the facet ideal of $\Delta$ is defined to be the monomial ideal $$I(\Delta)=(x_F\:\; F \mbox{  is a facet of }\Delta),$$ and the Stanley-Reisner $I_{\Delta}$ ideal of $\Delta$ is defined as $$I_{\Delta}=(x_F\:\; F \subseteq [n] \mbox{ and } F\notin \Delta).$$

   Put $F_i:=\{i,\ldots, i+t-1\}\subseteq [n]$  for $i=1,\ldots,n-t+1$ and define  $\Delta_{n,t}$ to be the simplicial complex generated by  $F_1,\ldots,F_{n-t+1}$.  It is evident  that the path ideal $I_t(L_n)$ is equal to the facet ideal $I(\Delta_{n,t})$ of $\Delta_{n,t}.$ A simplicial complex $\Delta$ is called a {\it flag complex} if its Stanley-Reisner ideals is generated by monomials of degree 2, or equivalently, all minimal non-faces of $\Delta$ have exactly two elements. Here, a {\it non-face} of $\Delta$ refers to a subset of $[n]$ which does not belong to $\Delta$. The lemma below uncovers an intriguing characteristic of $\Delta_{n,t}$, although we won't explore it further in this paper.

\begin{Lemma} \label{flag}
For all $2\leq t\leq n$, the Stanley-Reisner ideal $I_{\Delta_{n,t}}$ is generated by all quadratic monomials $x_ix_{j}$ with $j-i\geq t.$ In particular, $\Delta_{n,t}$ is a flag complex.
\end{Lemma}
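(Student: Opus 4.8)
The plan is to describe the faces of $\Delta_{n,t}$ explicitly and then simply read off its minimal non-faces. Since the facets of $\Delta_{n,t}$ are the ``windows'' $F_i=\{i,i+1,\ldots,i+t-1\}$ of $t$ consecutive integers, a subset $F\subseteq[n]$ is a face precisely when $F\subseteq F_i$ for some $i$. First I would establish the clean equivalence that a nonempty $F$ lies in $\Delta_{n,t}$ if and only if $\max(F)-\min(F)\leq t-1$. The forward implication is immediate from $F\subseteq F_i$. For the converse, given $\max(F)-\min(F)\leq t-1$, I would produce an admissible window by taking $i=\max\{1,\,\max(F)-t+1\}$ and verifying both $1\leq i\leq n-t+1$ and $F\subseteq F_i$. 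This boundary bookkeeping---confirming that the chosen window can always be slid to stay inside $[n]$, which is where the two cases $\max(F)\leq t$ and $\max(F)>t$ must be distinguished---is the only genuinely fiddly point, though it is entirely routine.

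With this description in hand, the non-faces of $\Delta_{n,t}$ are exactly the subsets $F$ with $\max(F)-\min(F)\geq t$. I would then observe that any such $F$ contains the two-element subset $\{\min(F),\max(F)\}$, whose two elements differ by at least $t$ and which is therefore itself a non-face. Conversely, every two-element set $\{i,j\}$ with $j-i\geq t$ is a non-face, and it is minimal because its only proper nonempty subsets are singletons, all of which are faces (a singleton satisfies $\max-\min=0\leq t-1$). Hence the minimal non-faces of $\Delta_{n,t}$ are precisely the pairs $\{i,j\}$ with $j-i\geq t$.

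Finally, recalling that $I_{\Delta_{n,t}}$ is minimally generated by the squarefree monomials $x_F$ as $F$ ranges over the minimal non-faces of $\Delta_{n,t}$, I conclude that $I_{\Delta_{n,t}}=(x_ix_j\:\; j-i\geq t)$. Every generator is quadratic, so by definition $\Delta_{n,t}$ is a flag complex, which is the ``in particular'' claim. I do not anticipate a substantial obstacle here; the entire argument reduces to the face characterization of the first paragraph, and the only care required is the index check keeping the sliding window within $[n]$.
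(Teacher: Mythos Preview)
Your proposal is correct and follows essentially the same approach as the paper: both arguments hinge on the observation that a nonempty $F\subseteq[n]$ is a face of $\Delta_{n,t}$ if and only if $\max(F)-\min(F)<t$, and hence every non-face contains a two-element non-face $\{\min(F),\max(F)\}$. Your version is in fact a bit more careful about the boundary bookkeeping (the paper simply writes $\{i_1,\ldots,i_k\}\subseteq F_{i_1}$ without checking that $i_1\leq n-t+1$), but the underlying idea is identical.
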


\begin{proof} Let $I$ denote the ideal $(x_ix_{j}\:\; j-i\geq t).$ It is clear that if $j-i\geq t$, then $\{i, j\}$ is a non-face of $\Delta_{n,t}$. Thus, $I\subseteq I_{\Delta_{n,t}}$.

 Conversely, let $F=\{i_1,\ldots,i_k\}$ be a non-face of $\Delta_{n,t}$ with $i_1<\cdots< i_k$. Then if $i_k-i_1<t$, then $\{i_1,\ldots,i_k\}\subseteq F_{i_1}$, a contradiction. This implies $i_k-i_1\geq t$ and so $x_F\in I$. This proves  $I\supseteq I_{\Delta_{n,t}}.$\end{proof}

  A facet $F$ of a simplicial complex $\Delta$ is called a \textit{leaf}  if either $F$ is the only facet of $\Delta$, or there exists a facet $G\neq F$ such that $H\cap F\subseteq G\cap F$ for any facet $H\neq F$. A leaf $F$ of  $\Delta$ is said to be \textit{good}  if the sets $F\cap H$, where $H\in \mathcal{F}(\Delta)$ is linearly ordered. A simplicial complex $\Delta$ is a \textit{simplicial forest} if every nonempty subcollection of $\Delta$ has a leaf. A connected simplicial forest is called a \textit{simplicial tree}. Here, a {\it subcollection} of $\Delta$ refers to a simplicial complex whose set of facets is a subset of $\mathcal{F}(\Delta).$ The following observation forms the foundation of all the results in this section.

\begin{Lemma} \label{tree}
For all $0<t\leq n$, the simplicial complex $\Delta_{n,t}$ is a simplicial tree.
\end{Lemma}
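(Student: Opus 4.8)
The plan is to verify the definition directly: I will show that every nonempty subcollection of $\Delta_{n,t}$ possesses a leaf, and that the whole complex is connected. Since the facets $F_i=\{i,i+1,\ldots,i+t-1\}$ are intervals of common length $t$, a subcollection is recorded by a nonempty subset $A\subseteq[n-t+1]$, its facets being $\{F_i:i\in A\}$. The computation driving everything is the shape of the pairwise intersections: for $a<b$ one has $F_a\cap F_b=\{b,b+1,\ldots,a+t-1\}$ when $b\leq a+t-1$, and $F_a\cap F_b=\emptyset$ otherwise.

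To produce a leaf I would single out the facet of largest index. Set $m=\max A$. If $|A|=1$, then $F_m$ is the unique facet of the subcollection and is a leaf by the first clause of the definition. If $|A|\geq 2$, let $j=\max(A\setminus\{m\})$ and propose $G=F_j$ as the witnessing facet. For any $k\in A$ with $k\neq m$ we have $k\leq j<m$, so by the intersection formula $F_k\cap F_m=\{m,\ldots,k+t-1\}$ (possibly empty) while $F_j\cap F_m=\{m,\ldots,j+t-1\}$. Because $k\leq j$ forces $k+t-1\leq j+t-1$, the inclusion $F_k\cap F_m\subseteq F_j\cap F_m$ holds in every case. Hence $F_m$ is a leaf of the subcollection, and since the subcollection was arbitrary, $\Delta_{n,t}$ is a simplicial forest.

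Connectedness, which upgrades \emph{forest} to \emph{tree}, follows from the chain structure of the facets: for $t\geq 2$ consecutive facets satisfy $F_i\cap F_{i+1}=\{i+1,\ldots,i+t-1\}\neq\emptyset$, so $F_1,\ldots,F_{n-t+1}$ are linked successively and the $1$-skeleton, hence $\Delta_{n,t}$, is connected. Combining the two parts yields that $\Delta_{n,t}$ is a simplicial tree.

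The one delicate point I anticipate is that the leaf definition requires a \emph{single} facet $G$ dominating the intersections of all remaining facets simultaneously, rather than a separate comparison for each pair. This is precisely what the monotonicity of $k\mapsto F_k\cap F_m$ (nondecreasing in $k$ for $k<m$) supplies, and it is the reason that selecting $j$ to be the second-largest index of $A$ works uniformly; an analogous argument with $\min A$ would serve equally well.
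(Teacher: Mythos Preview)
Your argument is correct and follows essentially the same route as the paper's proof: both compute that the intersections $F_k\cap F_m$ with a fixed extremal facet are nested along~$k$, the only cosmetic difference being that you take $m=\max A$ as the leaf while the paper uses $\min A$ (a symmetry you yourself point out). Your treatment of connectedness is in fact slightly more explicit than the paper's, which simply declares it ``clear''; note that both proofs tacitly leave aside the degenerate case $t=1$, $n\geq 2$, where $\Delta_{n,t}$ is disconnected.
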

\begin{proof} It is clear that  $\Delta_{n,t}$ is connected. Let $\Delta'=\langle F_{k_1},\ldots F_{k_r}\rangle$ be a subcollection of $\Delta$. Note that $F_{k_i}=\{k_i,k_i+1,\ldots, k_i+t-1\}$ for $i=1,\ldots,r$. We may assume that $r\geq 2$ and that $1\leq k_1<\cdots < k_r\leq n-t+1$. It can be easily verified that $$F_{k_1}\cap F_{k_i}=\left\{
\begin{array}{ll}
\{k_i, k_{i}+1, \ldots, k_{1}+t-1\}, & \hbox{if $k_i\leq k_{1}+t-1$;} \\
\emptyset, & \hbox{otherwise}
\end{array}
\right.$$
for $i=2,\ldots,r$. From this it follows that $$F_{k_1}\cap F_{k_2}\supset F_{k_1}\cap F_{k_3}\supset \cdots \supset F_{k_1}\cap F_{k_r}.$$
 Therefore $F_{k_1}$ is a (good) leaf of $\Delta'$.   Hence, $\Delta_{n,t}$ is a simplicial tree.
\end{proof}

We now present the  main result of this section.
\begin{Theorem}\label{main1}
For all $1\leq t\leq n$, both $I_t(L_n)$ and $I_t(L_n)^{\vee}$ are normally torsion-free.
\end{Theorem}
\begin{proof}
It was shown in \cite[Theorem 2.2 and Theorem 2.5]{HHTZ08} that if a simplicial complex $\Delta$ has no special odd cycles, then both the facet ideal $I(\Delta)$  and its Alexander dual $I(\Delta^*)=I(\Delta)^{\vee}$  are normally torsion-free.   On the other hand, by \cite[Theorem 3.2]{HHTZ08}, if  $\Delta$ is a simplicial forest, then $\Delta$ has no special cycles. In view of these facts, the result follows immediately from Lemma~\ref{tree}.
\end{proof}

We note that the result that $I_t(L_n)$ is normally torsion-free  has been  established in \cite[Proposition 3.2]{NQ24} utilizing a completely different method.

By applying \cite[Theorem 1.4.6]{HH} and \cite[Corollary 1.6]{HHTZ08} respectively, we immediately obtain the following corollaries.
\begin{Corollary}
Let $1\leq t\leq n$. Then $I_t(L_n)$ is normal, this is, $\overline{I_t(L_n)^s}=I_t(L_n)^s$ for all $s\geq 1$.
 \end{Corollary}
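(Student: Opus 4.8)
The plan is to deduce normality directly from the normal torsion-freeness established in the preceding Theorem, via the general principle recorded in \cite[Theorem 1.4.6]{HH}: every normally torsion-free squarefree monomial ideal is normal. Since $I_t(L_n)$ is generated by the squarefree monomials $u_i = x_i x_{i+1}\cdots x_{i+t-1}$, it is a squarefree monomial ideal, and the preceding Theorem guarantees that it is normally torsion-free; both hypotheses of the cited result therefore hold, and normality follows at once.

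To make the implication transparent, I would recall why the cited theorem is true in this setting. For a squarefree monomial ideal $I=\bigcap_{\mathfrak{p}\in\Min(I)}\mathfrak{p}$, each minimal prime $\mathfrak{p}$ is generated by a subset of the variables, so $\mathfrak{p}^{(s)}=\mathfrak{p}^s$ and the $s$-th symbolic power is $I^{(s)}=\bigcap_{\mathfrak{p}\in\Min(I)}\mathfrak{p}^s$. Being normally torsion-free means that $\Ass(S/I^s)$ never grows beyond $\Min(I)$, which for a squarefree monomial ideal is equivalent to the equality $I^s=I^{(s)}$ for every $s\geq 1$. On the other hand, each $\mathfrak{p}^s$ is integrally closed---its exponent set already fills out the corresponding Newton polyhedron---and an intersection of integrally closed ideals is again integrally closed. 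Hence $\overline{I^s}=\overline{I^{(s)}}=I^{(s)}=I^s$ for all $s$, which is exactly the assertion that $I_t(L_n)$ is normal.

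I do not anticipate any genuine obstacle: the statement is a formal consequence of the preceding Theorem together with a standard property of squarefree monomial ideals, so the only point requiring care is the bookkeeping that verifies the two hypotheses---squarefreeness, clear from the generators $u_i$, and normal torsion-freeness, supplied by the Theorem. In short, the corollary is immediate once the cited theorem is invoked.
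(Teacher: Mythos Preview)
Your proof is correct and follows exactly the paper's approach: invoke \cite[Theorem~1.4.6]{HH} to conclude that a normally torsion-free squarefree monomial ideal is normal, using the preceding Theorem for the normal torsion-freeness of $I_t(L_n)$. The additional explanation of why the cited result holds (via symbolic powers and integrally closed monomial primes) is sound and more detailed than the paper, which simply cites the reference without elaboration.
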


\begin{Corollary}
Let $1\leq t\leq n$. Then the Rees algebra $\mathcal{R}(I_t(L_n))$ is a normal Cohen-Macauly domain.
\end{Corollary}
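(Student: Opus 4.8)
The plan is to obtain all three properties --- domain, normal, and Cohen--Macaulay --- from the single fact, established in the preceding Corollary, that $I:=I_t(L_n)$ is a normal monomial ideal, combined with two classical structural theorems. Throughout I write $\mathcal{R}(I)=\bigoplus_{s\ge 0}I^s w^s\subseteq S[w]$, where $w$ denotes the Rees variable. The domain property is then immediate: since $S=K[x_1,\ldots,x_n]$ is a domain, so is $S[w]$, and $\mathcal{R}(I)$ is a subring of $S[w]$.

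For normality I would invoke the standard criterion that, for an ideal $I$ in a normal Noetherian domain $R$, the Rees algebra $\mathcal{R}(I)$ is integrally closed if and only if $\overline{I^s}=I^s$ for every $s\ge 1$. The ring $S$ is a UFD and hence normal, and the preceding Corollary supplies precisely the equalities $\overline{I^s}=I^s$ for all $s\geq 1$; therefore $\mathcal{R}(I)$ is a normal ring. One could equally bypass that Corollary and feed the normal torsion-freeness of $I$ proved above directly through \cite[Theorem 1.4.6]{HH} to reach the same conclusion.

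For the Cohen--Macaulay property the crucial point is that, because $I$ is generated by the monomials $u_1,\ldots,u_{n-t+1}$, the Rees algebra $\mathcal{R}(I)=K[x_1,\ldots,x_n,\,u_1w,\ldots,u_{n-t+1}w]$ is generated over $K$ by monomials in the variables $x_1,\ldots,x_n,w$, so it is an affine semigroup ring. Having just shown it is normal, I would conclude by Hochster's theorem, which asserts that every normal affine semigroup ring is Cohen--Macaulay.

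I do not expect a genuine obstacle here: the substantive work is already contained in the normal torsion-freeness of $I$ proved earlier in this section, and the remaining steps are invocations of well-known general results. The only points demanding mild care are to record explicitly that $S$ is normal, so that the normal-ideal criterion applies in the stated form, and to note that $\mathcal{R}(I)$ is an affine semigroup ring, which is exactly what licenses the application of Hochster's theorem.
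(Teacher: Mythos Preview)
Your argument is correct. The paper's own proof is a single citation --- ``By \cite[Corollary 1.6]{HHTZ08}'' --- so there is no detailed argument in the paper to compare against; what you have written is essentially the standard unpacking of that cited corollary. In particular, the chain ``normally torsion-free squarefree monomial ideal $\Rightarrow$ normal ideal $\Rightarrow$ normal Rees algebra $\Rightarrow$ normal affine semigroup ring $\Rightarrow$ Cohen--Macaulay (Hochster)'' is exactly the mechanism behind the result quoted from \cite{HHTZ08}. Your version has the advantage of being self-contained and of making explicit where each hypothesis is used (normality of $S$ for the Rees-algebra criterion, monomial generation of $I$ for the semigroup structure), whereas the paper simply delegates to the reference; but there is no substantive difference in the mathematics.
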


In the following, we aim to determine all the minimal prime ideals of $I_t(L_n)$, a task that will also prove useful in the subsequent section. To do this, we introduce some additional notation.  Specifically, if $F\subseteq [n]$, we denote by $\p_F$ the prime ideal generated by $x_i$ with $i\in F$.  For convenience, we also adopt the following notation.

\begin{Notation} \label{2.6} Let $\mathbf{C}_{n,t}$ be the set of subsets $\{i_1,\ldots,i_r\}$ of $[n]$ with $1\leq i_1<\cdots<i_r\leq n$ and such that the following conditions are satisfied:
\begin{enumerate}
   \item $1\leq i_1\leq t$;
   \item $i_2> t$;
   \item $1\leq i_{j+1}-i_j\leq t$ for all $j=1,\ldots,r-1$;
   \item $i_{j+2}-i_j> t$ for all $j=1,\ldots,r-2$;
   \item $i_{r-1}< n-t+1$;
   \item $n-t+1\leq i_r\leq n$.
\end{enumerate}
\end{Notation}

Recall  that a \textit{vertex cover} of the simplicial complex $\Delta$ is a subset $C\subset [n]$ such that $C\cap F\neq\emptyset$ for all $F\in\F(\Delta)$. A vertex cover $C$ is called a \textit{minimal vertex cover}, if no proper subset of $C$ is a vertex cover. It is well-known that $\p$ is a minimal prime ideal of $I(\Delta)$ if and only if $\p=\p_C$ for some minimal vertex cover of $\Delta$.

\begin{Proposition}\label{C_{n,t}} For $1\leq t\leq n$, the set of minimal prime ideals of $I_t(L_n)$ is given by  $$\{\p_F\:\; F\in \mathbf{C}_{n,t}\}.$$
\end{Proposition}
\begin{proof} It is equivalent to showing that $\mathbf{C}_{n,t}$ consists of all minimal vertex covers of $\Delta_{n,t}$. Let $F\in \mathbf{C}_{n,t}$. We first show that $F$ is a  vertex cover of $\Delta_{n,t}$. Given a facet $F_i$ of $\Delta_{n,t}$, where $1\leq i\leq n-t+1$. If $i\in F$, then $i\in F\cap F_i$ and so $F\cap F_i\neq \emptyset$. Suppose that $i\notin F$. If $i<i_1$ then $i_1\in F_i\cap F$ and so $F_i\cap F\neq \emptyset$. If $i>i_1$, then, since $i_r\geq n-t+1$,  there exists $1\leq j\leq r-1$ such that $i_j<i<i_{j+1}$. Since $i_{j+1}-i<i_{j+1}-i_j\leq t$, it follows that $i_{j+1}\in F_i$ and thus $F\cap F_i\neq \emptyset$. This proves $F$ is a vertex cover of $\Delta_{n,t}$. Next, we show that $F$ is minimal, that is,  $F\setminus \{i_j\}$ is not a vertex cover of $\Delta_{n,t}$ for $j=1,\ldots,r$. But, this is clear by noticing that $F_1\cap (F\setminus \{i_1\})=\emptyset$, $F_{i_{j-1}+1}\cap (F\setminus \{i_j\})= \emptyset$ for $j=2,\ldots, r-1$ and $F_{n-t+1}\cap (F\setminus \{i_r\})= \emptyset$. Therefore, $F$ is a minimal vertex cover of $\Delta_{n,t}$ indeed.

Conversely, let $F$ be a minimal vertex cover of $\Delta_{n,t}$. We may write $F=\{i_1,\ldots,i_r\}$ with $i_1<\ldots<i_r$. Since $F_1\cap F\neq \emptyset$, it follows that $i_1\leq t$ and so (1) holds. Similarly, since $F_{n-t+1}\cap F\neq \emptyset$, we have $i_r\geq n-t+1$ and this proves (6).  If $i_2\leq t$, then, note that $i_1\in F_i$ implies $i_2\in F_i$ for all $i=1,\ldots,n-t+1$, it follows that $F\setminus \{i_1\}$ is also a vertex cover of $\Delta_{n,t}$, a contradiction. This establishes (2). Similarly, (5) has to hold.  If   $i_{j+1}-i_j> t$ for some $j\in \{1,\ldots,r-1\}$, then $F_{i_j+1}\cap F=\emptyset$. This is impossible and so (3) should hold. It remains to prove (4). Suppose on the contrary  that $i_{j+2}-i_j\leq t$ for some $1\leq j\leq r-2$. Then, if $i_{j+1}\in F_i$ for some $i$, we have $i_{j+1}\in \{i, i+1, \ldots, i+t-1\}$. It follows that either $i_{j+2}\leq i+t-1$ (i.e., $i_{j+2}\in F_i$) or $i_j\geq i$ (i.e., $i_j\in F_i$), for otherwise $i_{i+2}-i_j>t$. Thus, $F\setminus \{i_{j+1}\}$ is also a vertex cover of $\Delta_{n,t}$, which is a contradiction. This completes the proof.
\end{proof}

This result can  be checked using \cite[Theorem 3.1]{BS21}. Combining Theorem~\ref{main1} with Proposition~\ref{C_{n,t}}, we obtain the following result.

\begin{Corollary} Let $2\leq t\leq n$. Then $\mathrm{ass}(I_t(L_n)^s)=\{\p_F\:\; F\in \mathbf{C}_{n,t}\}$ for all $s\geq 1$.
\end{Corollary}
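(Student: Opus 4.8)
The plan is to combine the two principal results already established in this section. First I would observe that $I:=I_t(L_n)$ is a squarefree monomial ideal and hence radical; consequently $S/I$ is reduced and has no embedded associated primes, so that $\mathrm{ass}(I)=\Ass_S(S/I)$ coincides with the set $\Min(I)$ of minimal prime ideals of $I$. By Theorem~\ref{C_{n,t}} this set is precisely $\{\p_F\:\; F\in C_{n,t}\}$.

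Next I would invoke the theorem asserting that $I_t(L_n)$ is normally torsion-free. By the very definition of normal torsion-freeness recorded earlier, this means $\mathrm{ass}(I^s)=\mathrm{ass}(I)$ for every $s\geq 1$. Chaining these equalities gives
$$\mathrm{ass}(I^s)=\mathrm{ass}(I)=\Min(I)=\{\p_F\:\; F\in C_{n,t}\}\quad\text{for all }s\geq 1,$$
which is exactly the assertion of the corollary.

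There is essentially no real obstacle here, since all of the substantive work has been done upstream: the normal torsion-freeness is obtained from the simplicial tree structure of Lemma~\ref{tree} together with the results of \cite{HHTZ08}, and the combinatorial identification of the minimal vertex covers (equivalently the minimal primes) is carried out in Theorem~\ref{C_{n,t}}. The only step requiring a word of justification is the passage $\mathrm{ass}(I)=\Min(I)$ for the case $s=1$, and this rests solely on the radicality of the squarefree ideal $I$, so that no embedded primes can occur. I would also note that the hypothesis $2\leq t$ merely rules out the degenerate value $t=1$ (where a single vertex would constitute a path); the reasoning itself is uniform in $t$ and applies verbatim whenever a genuine $t$-path ideal is present.
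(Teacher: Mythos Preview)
Your argument is correct and is exactly the intended one: the paper states this corollary without proof, since it follows immediately by combining the normal torsion-freeness of $I_t(L_n)$ with the description of its minimal primes in Theorem~\ref{C_{n,t}}, together with the standard fact that a squarefree monomial ideal has no embedded primes. There is nothing to add.
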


Given a monomial ideal $I$, we use $\Deg(I)$ to denote the maximal degree of  minimal generators of $I$. We now determine  $\Deg(I_t(L_n)^{\vee})$ for all $1\leq t\leq n$. Recall for a square-free monomial ideal  $I=I(\Delta)$, the Alexander dual of $I$ is defined to be the monomial ideal $$I^{\vee}=(x_C\:\; C \mbox{ is a vertex cover of } \Delta).$$

\begin{Corollary}\label{Deg} The Alexander dual $I_t(L_n)^{\vee}$ of $I_t(L_n)$ has the minimal generating set $G=\{x_F\:\; F\in \mathbf{C}_{n,t}\}$. Assume $n=(t+1)p+q$ with $0\leq q\leq t$. Then the maximal  degree of minimal generators  of $I_t(L_n)^{\vee}$ is given by $$\Deg(I_t(L_n)^{\vee})=\left\{
\begin{array}{ll}
2p, & \hbox{$q<t$;} \\
2p+1, & \hbox{$q=t$.}
\end{array}
\right.
$$
\end{Corollary}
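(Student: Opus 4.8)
The plan is to first pin down the generating set of $I^{\vee}$ (writing $I:=I_t(L_n)$) and thereby reduce the degree formula to a purely combinatorial extremal problem over $C_{n,t}$. Since $I$ is a squarefree monomial ideal it is radical, so Theorem~\ref{C_{n,t}} furnishes the irredundant presentation $I=\bigcap_{F\in C_{n,t}}\p_F$ as an intersection of monomial primes. The basic property of Alexander duality \cite{HH} then yields $I^{\vee}=(X_F\:\; F\in C_{n,t})$, where $X_F=\prod_{i\in F}x_i$; because the members of $C_{n,t}$ are precisely the minimal vertex covers of $\Delta_{n,t}$, they form an antichain, so no generator $X_F$ divides another and $G=\{X_F\:\; F\in C_{n,t}\}$ is the minimal generating set. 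As $\deg X_F=|F|$, the remaining task is to compute $\max\{|F|\:\; F\in C_{n,t}\}$.

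The key device for this computation is a reformulation of $C_{n,t}$ in terms of gaps. Given $F=\{i_1<\cdots<i_r\}$, I would adjoin virtual endpoints $i_0:=0$ and $i_{r+1}:=n+1$ and set $d_j:=i_{j+1}-i_j$ for $0\le j\le r$. A routine check shows that conditions (1)--(6) of Notation~\ref{2.6} are equivalent to the single system
$$1\le d_j\le t\ (0\le j\le r),\qquad d_j+d_{j+1}\ge t+1\ (0\le j\le r-1),\qquad \sum_{j=0}^{r}d_j=n+1,$$
where (1),(3),(6) supply the bound $d_j\le t$ and the boundary/interior conditions (2),(4),(5) supply the consecutive-sum bound. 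Hence $\max\{|F|\}=g-1$, where $g$ is the largest number of parts admitted by a composition $n+1=d_0+\cdots+d_{g-1}$ obeying these constraints.

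To finish I would bound $g$ by splitting indices according to parity and then exhibit matching constructions. Telescoping the inequalities $i_{j+2}-i_j\ge t+1$ along the even-indexed subsequence of $i_0,\ldots,i_{r+1}$ gives, when $r=2k$, the estimate $k(t+1)\le i_{2k}-i_0\le n$, whence $k\le p$ and $|F|\le 2p$; when $r=2k+1$ it gives $(k+1)(t+1)\le i_{2k+2}-i_0=n+1$, whence $|F|\le 2p$ if $q<t$ and $|F|\le 2p+1$ if $q=t$. For the reverse inequalities I would display explicit gap sequences: the alternating pattern $1,t,1,t,\ldots,1,t$ with $p+1$ unit gaps and $p+1$ gaps equal to $t$ has length $2p+2$ and total $(p+1)(t+1)=n+1$, realizing $|F|=2p+1$ when $q=t$; and when $q<t$ (so $p\ge 1$) the length-$(2p+1)$ pattern with $p+1$ unit gaps and $p$ gaps equal to $t$, one unit gap enlarged to $1+q\le t$, has total $p(t+1)+1+q=n+1$, realizing $|F|=2p$. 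The main obstacle I anticipate is the bookkeeping in this last step: one must run the optimality bound and the construction separately for each parity of $|F|$, select the parity that wins in each of the two ranges of $q$, and verify that the extremal gap sequences satisfy every consecutive-sum constraint, including the two boundary conditions (2) and (5) encoded by the virtual endpoints.
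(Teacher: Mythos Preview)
Your argument is correct. The opening paragraph on the generating set matches the paper's treatment (both rest on Theorem~\ref{C_{n,t}} and standard duality), and your gap reformulation with virtual endpoints $i_0=0$, $i_{r+1}=n+1$ is a faithful translation of conditions (1)--(6); the constraint checks you flag as ``routine'' and ``bookkeeping'' go through without surprises.

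Where you genuinely diverge from the paper is in the extremal argument. The paper fixes the explicit sequence $a_1=1,\ a_2=t+1,\ a_3=t+2,\ \ldots,\ a_{2k}=k(t+1),\ a_{2k+1}=k(t+1)+1$ and proves by induction that $a_m\le i_m$ for every $F=\{i_1<\cdots<i_r\}\in C_{n,t}$; the upper bound then drops out of condition~(5), since $r>2p$ (resp.\ $r>2p+1$) would force $i_{r-1}\ge a_{2p}\ge n-t+1$. Your upper bound instead pairs consecutive gaps and telescopes the inequality $i_{j+2}-i_j\ge t+1$, splitting on the parity of $r$. The two extremal constructions coincide (your alternating gap pattern $1,t,1,t,\ldots$ is exactly the paper's sequence $(a_m)$), but your bound is arguably cleaner: it avoids the auxiliary induction and makes the role of $n+1=(t+1)p+q+1$ transparent. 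The paper's pointwise inequality $a_m\le i_m$, on the other hand, gives slightly more information---it identifies the $(a_m)$ as the coordinatewise minimum of $C_{n,t}$---which is not needed here but could be useful elsewhere.
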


\begin{proof} The first statement is immediate from the definition of the Alexander dual. To prove the second statement, we denote $\Deg(I_t(L_n)^{\vee})$ as $D$ for simplicity. Consider the following infinite sequence  $$1, t+1, t+2, 2t+2, 2t+3,\cdots, kt+k, kt+k+1, \cdots .$$
Let $a_m$ denote the $m$-th number of this sequence. Namely, $a_1=1, a_2=t+1, a_3=t+2$ and so on. Thus, $a_{2k}=kt+k$ and $a_{2k+1}=kt+k+1$ for $k\geq 1$. Given $F=\{i_1,\ldots,i_r\}\in \mathbf{C}_{n,t}$ with $1\leq i_1<\cdots<i_r\leq n$, we claim  that $$a_m\leq i_m \mbox{ for all  } m=1,\ldots, r.$$  In fact, $a_1=1\leq i_1$ and $a_2=t+1\leq i_2$. Assume that $2\leq m\leq r-1$ and $a_j\leq i_j$ for $j=1,\ldots,m$. Then, if $m=2k$ for some $k\geq 1$, then $a_{m+1}=a_m+1$ and so $i_{m+1}\geq i_m+1\geq a_m+1=a_{m+1}$; if $m=2k+1$, then $a_{m+1}-a_{m-1}=t+1$. Since $i_{m+1}-i_{m-1}\geq t+1$, it follows that $a_{m+1}\leq i_{m+1}$. This proves the claim.

We next consider the following cases:

Case 1: If $n=(t+1)p+q$ with $0\leq q\leq t-1$, then $$(p-1)t+p< n-t+1\leq (t+1)p.$$
Let $G=\{a_1,a_2,\ldots,a_{2p}\}$. Then $n\geq a_{2p}=(t+1)p\geq n+t-1$ and $a_{2p-1}=t(p-1)+p<n-t+1$, and so it is routine to verify that $G\in \mathbf{C}_{n,t}$. This proves $D\geq 2p$. To prove $D\leq 2p$, we assume on the contrary that there exists $F=\{i_1,\ldots,i_r\}\in \mathbf{C}_{n,t}$ with $1\leq i_1<\cdots<i_r\leq n$ such that $r>2p$. Then $i_{r-1}\geq i_{2p}\geq a_{2p}\geq n-t+1$. This is contradicted to condition (5). Hence, $D=2p$.

Case 2: If $n=(t+1)p+t$, then $n-t+1=(t+1)p+1=a_{2p+1},$  and one easily checks that $\{a_1,\ldots, a_{2p+1}\}$ belongs to $\mathbf{C}_{n,t}$. It follows that $D=2p+1$, by using the same argument  as in case 1. \end{proof}

  The formula for the projective dimension of $S/I_t(L_n)$ was first established in \cite[Theorem 4.1]{HV10}, and subsequently proved through an alternative method in \cite[Corollary 5.1]{BHO11}. In what follows, utilizing the theory of Alexander duality, we present a novel and concise demonstration. Recall a well-known result stating that $S/I$ is  Cohen-Macaulay if and only if the Alexander dual $I^{\vee}$ of $I$ has a linear resolution, (see for example \cite[Theorem 8.1.9]{HH}). Furthermore,  according to  \cite[Theorem 8.2.20]{HH},   $S/I$ is  sequentially Cohen-Macaulay precisely when $I^{\vee}$ is componentwise linear. Finally, for any square-free monomial ideal $I$ of $S$, we have $\pd(S/I)=\reg(I^\vee)$, as stated in \cite[Proposition~8.1.10]{HH}. Here, $\pd$ refers to the projective dimension and $\reg$ refers to the (Castelnuovo-Mumford) regularity.  For more about Alexander duality, we refer to Chapter 8 of the book \cite{HH}. .

\begin{Corollary} Let $2\leq t\leq n$ and write  $n=(t+1)p+q$ with $0\leq q\leq t$. Then $S/I_t(L_n)$ is sequentially Cohen-Macaulay and $$\mathrm{pd}(S/I_t(L_n))=\left\{
\begin{array}{ll}
2p, & \hbox{$q<t$;} \\
2p+1, & \hbox{$q=t$.}
\end{array}
\right. .$$
\end{Corollary}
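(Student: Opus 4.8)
The plan is to route both assertions through the Alexander dual $I^{\vee}:=I_t(L_n)^{\vee}$, exploiting the three facts recalled immediately before the statement: that $S/I_t(L_n)$ is sequentially Cohen--Macaulay precisely when $I^{\vee}$ is componentwise linear (\cite[Theorem 8.2.20]{HH}); that $\pd(S/I_t(L_n))=\reg(I^{\vee})$ (\cite[Proposition 8.1.10]{HH}); and, for the regularity computation, the standard property of componentwise linear ideals that their regularity equals the largest degree of a minimal generator (see \cite{HH}). Since that top generator degree is precisely $\Deg(I^{\vee})$, already computed in Corollary~\ref{Deg}, the only genuine content left is to establish that $I^{\vee}$ is componentwise linear.

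For that step I would lean on the structural input of Lemma~\ref{tree}, namely that $\Delta_{n,t}$ is a simplicial tree. Faridi's theorem that the facet ideal of a simplicial forest defines a sequentially Cohen--Macaulay quotient then applies directly, giving at once that $S/I_t(L_n)$ is sequentially Cohen--Macaulay; by the Alexander-dual characterization this is equivalent to $I^{\vee}$ being componentwise linear. (As an alternative that stays closer to the ideal's combinatorics, one could instead try to exhibit linear quotients on each graded component $(I^{\vee})_{\langle d\rangle}$ directly from the explicit generating set $\{X_F\:\;F\in C_{n,t}\}$ of Corollary~\ref{Deg}, ordering the faces $F$ lexicographically by their entries; but invoking the forest structure is cleaner and sidesteps a bookkeeping argument.)

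With componentwise linearity in hand the regularity collapses to a degree count: $\reg(I^{\vee})$ equals the maximal degree of a minimal generator of $I^{\vee}$, i.e. $\Deg(I^{\vee})$, which Corollary~\ref{Deg} evaluates as $2p$ when $q<t$ and $2p+1$ when $q=t$. Combining this with $\pd(S/I_t(L_n))=\reg(I^{\vee})$ yields the stated formula, completing the proof. The main obstacle is the componentwise-linearity step, and everything after it is a direct substitution of results already established; were one to avoid citing Faridi and argue self-contained, the difficulty would shift to verifying that each $(I^{\vee})_{\langle d\rangle}$ has a linear resolution, where the natural linear-quotients attack would force one to check, generator by generator, that the relevant colon ideals are generated by variables, using the interlacing gap conditions (3) and (4) of Notation~\ref{2.6}.
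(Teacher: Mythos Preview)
Your proposal is correct and follows essentially the same route as the paper: invoke Faridi's theorem (via Lemma~\ref{tree}) to get that $S/I_t(L_n)$ is sequentially Cohen--Macaulay, translate this through \cite[Theorem~8.2.20]{HH} into componentwise linearity of $I^{\vee}$, then use $\reg(I^{\vee})=\Deg(I^{\vee})$ together with \cite[Proposition~8.1.10]{HH} and Corollary~\ref{Deg}. The paper even cites the specific result \cite[Corollary~8.2.14]{HH} for the step $\reg=\Deg$, which you left as a generic reference.
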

\begin{proof}
  In \cite[Corollary~5.4]{Far04}, it was shown that if $\Delta$ is a simplicial forest then $S/I(\Delta)$ is   sequentially Cohen-Macaulay. Applying this result and in view of Lemma~\ref{tree}, we can conclude that $S/I_t(L_n)$ is sequentially Cohen-Macaulay.

By \cite[Theorem 8.2.20]{HH}, we know  $I_t(L_n)^{\vee}$ is componentwise linear.  Therefore, using \cite[Corollary~8.2.14]{HH}, we have $\reg(I_t(L_n)^\vee)=\Deg(I_t(L_n)^\vee)$. Furthermore,  according to \cite[Proposition~8.1.10]{HH}, we have $\pd(S/I_t(L_n))=\Deg(I_t(L_n)^\vee)$. The result now follows from Corollary~\ref{Deg}.
\end{proof}

\section{Multiplicity}

In this section, we present the main result of this paper, which provides an explicit formula for determining  the multiplicity of powers of $I_t(L_n).$   We will maintain all notation introduced in the previous sections. To start, we introduce  a general result regarding the multiplicity of square-free monomial ideals.

\begin{Lemma}\label{basic} Let $I$ be a square-free monomial ideal of height  $h$. Then $\mult(S/I)$ is equal to the number of minimal prime ideals of $I$ of  height $h$.
\end{Lemma}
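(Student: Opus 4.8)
The plan is to reduce the multiplicity computation to a purely combinatorial count via the standard decomposition of the Hilbert series of a square-free monomial ideal over its minimal primes. First I would recall that for a monomial ideal $I$ one has the primary decomposition $I=\bigcap_{\pp\in\Min(I)}\pp$ (since $I$ is square-free, it is radical, so the intersection of its minimal primes recovers $I$ exactly). The dimension of $S/I$ is $n-h$, where $h=\height(I)$, and the minimal primes of height $h$ are precisely those corresponding to top-dimensional components of $S/I$.

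The key step is to express $\mult(S/I)$ in terms of the top-dimensional primary components. I would use the additivity of multiplicity along a dimension filtration: if $\dim S/I=d=n-h$, then $e_0(S/I)=\sum_{\pp}\length_{S_\pp}((S/I)_\pp)$, where the sum runs over all minimal primes $\pp$ with $\dim S/\pp=d$, equivalently $\height\pp=h$ (this is the associativity formula for multiplicity, see e.g.\ the standard reference in \cite{HH}). For each such $\pp=\pp_F$ with $\abs{F}=h$, the localization $(S/I)_\pp$ is a localization of $S/\pp_F$ at its maximal ideal, which is a field-like (regular, zero-dimensional after localization) situation, so the length is exactly $1$. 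Hence each top-height minimal prime contributes exactly $1$ to the multiplicity, and $\mult(S/I)$ counts precisely the number of minimal primes of height $h$.

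Alternatively, and perhaps more transparently in this monomial setting, I would argue directly on Hilbert series. Writing $\dim S/I=d$, the multiplicity is $e_0(S/I)=\mathbf{Q}(S/I,1)$ in the notation of the excerpt, where $\HS(S/I,z)=\mathbf{Q}(S/I,z)/(1-z)^d$. A square-free monomial ideal is the Stanley--Reisner ideal of a simplicial complex, and the Hilbert series of $S/I_\Delta$ is $\sum_{F\in\Delta} z^{\abs F}/(1-z)^{\abs F}$ summed over faces; collecting by dimension gives $\mathbf{Q}(S/I,1)=f_{d-1}$, the number of $(d-1)$-dimensional faces, i.e.\ facets of maximal dimension $d$. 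Dualizing, the facets of the Stanley--Reisner complex of maximal dimension correspond exactly to the minimal primes of $I$ of minimal height $h=n-d$, so the count agrees.

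I expect the main obstacle to be bookkeeping rather than a genuine difficulty: one must be careful that only the \emph{top-dimensional} minimal primes contribute to $e_0$, since lower-dimensional components are invisible to the leading term of the Hilbert polynomial. The cleanest route is the associativity formula for multiplicity, reducing everything to the observation that for a height-$h$ monomial prime $\pp_F$ the local contribution $\length_{S_{\pp_F}}((S/I)_{\pp_F})$ equals $1$; verifying this last length computation (that after localizing, the minimal primes other than $\pp_F$ either become the unit ideal or are irrelevant, leaving a field) is the only place where a short argument is needed.
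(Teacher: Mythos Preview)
Your proposal is correct; both the associativity formula and the Stanley--Reisner Hilbert-series argument give clean proofs, and the only point requiring care---that each height-$h$ minimal prime contributes exactly $1$---is handled properly. The paper itself does not supply an argument at all, merely citing \cite[Corollary~6.2.3]{HH}; your two routes are exactly the standard ways to establish that cited fact, so you are reproducing the content behind the reference rather than taking a different approach.
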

\begin{proof} This follows directly from \cite[Corollary 6.2.3]{HH}.
\end{proof}

For a monomial ideal  $I$, following \cite{HM21}, we denote by $\mbox{m-grade}(I)$
the maximal length of a regular sequence of monomials contained in $I$ and call this number
the monomial grade of $I$. One has $\mbox{m-grade}(I) \leq  \mathrm{grade}(I)\leq  \height(I)$.

\begin{Lemma} \label{height}  Assume that $n=\xi t+\theta$ with $\xi \ge 1$ and  $0\le \theta<t$. Then $$\height(I_t(L_n))=\xi.$$
 \end{Lemma}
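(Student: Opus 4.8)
The plan is to identify $\height(I_t(L_n))$ with the least cardinality of a vertex cover of $\Delta_{n,t}$ and to pin this number down to $a$ by a lower and an upper bound that meet. Recall that for the squarefree monomial ideal $I(\Delta_{n,t})=I_t(L_n)$ the minimal primes are exactly the $\p_C$ for minimal vertex covers $C$ of $\Delta_{n,t}$ (as recalled in the paragraph preceding Theorem~\ref{C_{n,t}}), so $\height(I_t(L_n))$ equals the minimal size of a vertex cover. I would sandwich this quantity using the chain $\mbox{m-grade}(I)\le\grade(I)\le\height(I)$ noted just above: an explicit cover of size $a$ gives the upper bound, and $a$ pairwise coprime generators give the lower bound.

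For the upper bound I would exhibit the cover consisting of all multiples of $t$ inside $[n]$, namely $C=\{t,2t,\ldots,at\}$. Since $n=at+b$ with $0\le b<t$ we have $at\le n<(a+1)t$, so $C$ is precisely the set of multiples of $t$ in $[n]$ and $\abs{C}=a$. Each facet $F_i=\{i,i+1,\ldots,i+t-1\}$ is a block of $t$ consecutive integers in $[n]$, hence contains exactly one multiple of $t$, which therefore lies in $C$. Thus $C\cap F_i\ne\emptyset$ for every $i$, so $C$ is a vertex cover of $\Delta_{n,t}$ and $\height(I_t(L_n))\le\abs{C}=a$.

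For the lower bound I would use the generators $u_1,u_{t+1},u_{2t+1},\ldots,u_{(a-1)t+1}$, whose supports are the facets $F_1,F_{t+1},\ldots,F_{(a-1)t+1}$. These intervals are pairwise disjoint, and the last index is legitimate because $(a-1)t+1=at-t+1\le n-t+1$ follows from $at\le n$. Since any vertex cover must meet each of these $a$ disjoint facets, every cover has at least $a$ elements, giving $\height(I_t(L_n))\ge a$. Equivalently, these $a$ monomials have pairwise disjoint supports, hence form a regular sequence in $S$, so $\mbox{m-grade}(I_t(L_n))\ge a$; combined with the displayed chain and the upper bound this yields $a\le\mbox{m-grade}(I)\le\grade(I)\le\height(I)\le a$ and forces equality.

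The argument is elementary, so I expect no serious obstacle; the only point needing care is the boundary bookkeeping near the right end $n$. I would verify that the remainder $b$ does not force an extra cover element — that is, that the rightmost facet $F_{n-t+1}$ still contains a multiple of $t$ already in $C$, which amounts to $at\ge n-t+1$, i.e. exactly $b\le t-1$ — and that the number of pairwise disjoint facets is exactly $a$ rather than $a\pm1$. Keeping straight the two index inequalities $(a-1)t+1\le n-t+1$ and $at\le n<(a+1)t$ is essentially the whole content of the proof.
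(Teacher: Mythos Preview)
Your proposal is correct and follows essentially the same approach as the paper: both use the vertex cover $\{t,2t,\ldots,at\}$ for the upper bound and the regular sequence $u_1,u_{t+1},\ldots,u_{(a-1)t+1}$ (with pairwise disjoint supports) for the lower bound via the chain $\mbox{m-grade}(I)\le\height(I)$. The only cosmetic difference is that the paper cites Notation~\ref{2.6} to assert $\{t,2t,\ldots,at\}\in C_{n,t}$, whereas you verify directly that this set is a vertex cover; for an upper bound on the height the direct verification suffices and minimality is unnecessary.
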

 \begin{proof} Since $u_1,u_{t+1},\ldots,u_{(\xi-1)t+1}$ forms a regular sequence that is contained in $I_t(L_n)$, it follows that $\mbox{m-grade}(I_t(L_n))\geq \xi$, and thus  $\height(I_t(L_n))\geq \xi.$ Additionally, a simple verification shows that the set $\{t,2t,\cdots,\xi t\}$ belongs to $\mathbf{C}_{n,t}$, (refer to Notation~\ref{2.6}), and so $(x_{t},\ldots, x_{\xi t})$ is a minimal prime ideal of $I_t(L_n)$. This implies $\height(I_t(L_n))\leq\xi$, completing the proof.
 \end{proof}

 In \cite{HM21}, a monomial ideal $I$ is called of {\it K$\ddot{\mathrm{o}}$nig type} if $I\neq  0$ and $\mbox{m-grade}(I) = \height(I)$. Thus, $I_t(L_n)$ is of K$\ddot{\mathrm{o}}$nig type in view of the proof of Lemma~\ref{height}.

 We next describe the minimal prime ideals of $I_t(L_n)$ of height $\xi$.

 \begin{Lemma} \label{newass} Assume $n=\xi t+\theta$ with $\xi\ge 1$ and  $0\le \theta<t$.
 Let $\p=(x_{i_1},\ldots, x_{i_\xi})$ be a prime ideal of height $\xi$ with $1\leq i_1< \cdots< i_\xi\leq n$. Then $\p\in\mathrm{ass}(I_t(L_n))$ if and only if the following conditions are satisfied:
    \begin{enumerate}
        \item[(1)] for $1\le j\le \xi$, one has $(j-1)t+\theta+1\le i_j\le jt$;
        \item[(2)] for $1\le j\le \xi-1$, one has  $i_{j+1}-i_j\le t$.
    \end{enumerate}
    \end{Lemma}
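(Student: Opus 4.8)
The plan is to reduce the assertion to a purely combinatorial description of the vertex covers of $\Delta_{n,t}$ of minimum cardinality, and then to verify that description directly. Since $I=I_t(L_n)$ is a squarefree (hence radical) monomial ideal, we have $\mathrm{ass}(I)=\Min(I)$, and the minimal primes of $I$ are exactly the $\p_C$ with $C$ a minimal vertex cover of $\Delta_{n,t}$, as recalled above. By Lemma~\ref{height} we have $\height(I)=a$, and $a$ is precisely the least cardinality of a vertex cover of $\Delta_{n,t}$, since every vertex cover contains a minimal one and minimal covers give the minimal primes. Hence, for $F=\{i_1,\ldots,i_a\}$ the set $F$ is a minimal vertex cover if and only if it is a vertex cover at all: any cover of cardinality $a=\height(I)$ is automatically minimal. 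It therefore suffices to prove that a set $F$ of cardinality $a$ is a vertex cover of $\Delta_{n,t}$ if and only if conditions (1) and (2) hold.

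Next I would pass to a more flexible boundary formulation. Writing the cover condition as $F\cap F_i\neq\emptyset$ for every facet $F_i=\{i,i+1,\ldots,i+t-1\}$ with $1\le i\le n-t+1$, I claim this is equivalent to the three conditions $i_1\le t$, $\ i_a\ge n-t+1$, and $i_{j+1}-i_j\le t$ for all $j$. For the forward direction I would test coverage on the extreme facets $F_1$ and $F_{n-t+1}$, which force $i_1\le t$ and $i_a\ge n-t+1$ respectively, and then, assuming a gap $i_{j+1}-i_j>t$ occurred, exhibit the facet $F_{i_j+1}=\{i_j+1,\ldots,i_j+t\}$, which meets no element of $F$; the one point needing care is to confirm that $F_{i_j+1}$ is a genuine facet, i.e. $i_j+1\le n-t+1$, which follows from $i_{j+1}\le n$ together with $i_{j+1}>i_j+t$. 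For the converse, given any facet $F_i$ I would use $i\le n-t+1\le i_a$ to select the largest index $j$ with $i_j<i$ (handling $i\le i_1$ separately), and then the gap bound places $i_{j+1}\in\{i,\ldots,i+t-1\}$, so $F\cap F_i\neq\emptyset$.

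It remains to identify the boundary formulation with conditions (1) and (2). The gap condition $i_{j+1}-i_j\le t$ is exactly (2). Propagating $i_1\le t$ upward through this gap bound yields the upper estimates $i_j\le jt$ by induction on $j$, while propagating $i_a\ge n-t+1=(a-1)t+b+1$ downward yields the lower estimates $i_j\ge (j-1)t+b+1$; these two families of inequalities are exactly (1). Conversely, (1) contains $i_1\le t$ and $i_a\ge n-t+1$ as its $j=1$ and $j=a$ instances, so the two formulations coincide. I would also remark that the bounds in (1) are compatible precisely because $b<t$ guarantees $(j-1)t+b+1\le jt$.

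I expect the conceptual crux to be the opening reduction rather than any single computation: the observation that $|F|=a=\height(I)$ makes minimality automatic is what lets us replace membership in $\mathrm{ass}(I)$ by the bare vertex cover property, bypassing the full list of defining conditions of $C_{n,t}$. The remaining work—the inductive propagation of the boundary inequalities and the case split in the converse—is routine, the only delicate point being the range check on the index $i_j+1$ in the forward direction.
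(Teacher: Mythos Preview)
Your proposal is correct and follows a genuinely different route from the paper. The paper argues the sufficiency direction by checking that $F$ satisfies all six defining conditions of $C_{n,t}$ and then invoking Theorem~\ref{C_{n,t}}; for necessity it uses the $a$ pairwise disjoint facets $F_1,F_{t+1},\ldots,F_{(a-1)t+1}$ together with pigeonhole to place $i_j$ in the $j$-th of these, reading off the bounds in (1), and then cites Theorem~\ref{C_{n,t}} again for (2). Your argument, by contrast, never touches the classification of $C_{n,t}$: you observe once that a size-$a$ cover is automatically minimal, so the question becomes a direct characterization of size-$a$ vertex covers, which you settle via the intermediate ``boundary formulation'' ($i_1\le t$, $i_a\ge n-t+1$, gaps $\le t$) and an inductive propagation of these two endpoint inequalities. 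This is more self-contained and arguably cleaner, since it bypasses the somewhat longer list of conditions in Notation~\ref{2.6}. What the paper's pigeonhole approach buys is a single-stroke derivation of the upper bounds $i_j\le jt$ (and, with the shifted disjoint family $F_{b+1},F_{t+b+1},\ldots,F_{(a-1)t+b+1}$, the lower bounds as well) without any induction; your propagation argument trades that off for not needing Theorem~\ref{C_{n,t}} at all.
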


    \begin{proof} Assume that the elements of $F=\{i_1,\ldots,i_\xi\}$ meet conditions (1) and (2). We proceed to prove $F\in \mathbf{C}_{n,t}$, as  defined in   Notation~\ref{2.6}. First,  it is easy to see that $i_1\leq t$ and $i_2\geq (2-1)t+\theta+1\geq t+1$. Next, we observe that $i_{\xi-1}\leq (\xi-1)t=n-t-\theta<n-t+1$ and $i_\xi\geq (\xi-1)t+\theta+1=n-t+1$. Finally, for $j=1,\ldots, \xi-2$,  we see that
   $$i_{j+2}-i_j\geq (j+1)t+\theta+1-jt=t+\theta+1\geq t+1.$$
         Thus, $F\in \mathbf{C}_{n,t}$  and so $\p\in\mathrm{ass}(I_t(L_n))$ by Proposition~\ref{C_{n,t}}.

    Conversely, since $\p\in\mathrm{ass}(I_t(L_n))$, we have $\{i_1,\ldots,i_\xi\}$ is a minimal vertex cover of $\Delta_{n,t}$. Given that the facets  $F_1,F_{t+1},\ldots,F_{(\xi-1)t+1}$   and   $F_{\theta+1},F_{t+\theta+1},\ldots,F_{(\xi-1)t+\theta+1}$ of $\Delta_{n,t}$ are pairwise disjoint within each respective set, it follows that for $j=1,\ldots, \xi$, one has $i_j$ belongs to both $F_{(j-1)t+1}$ and $F_{(j-1)t+\theta+1}$,  and thus $(j-1)t+\theta+1\le i_j\le jt$, satisfying (1). As to condition (2), it follows  directly from Proposition~\ref{C_{n,t}}.
        \end{proof}

To determine the multiplicity of $S/I_t(L_n)$,  in light of Lemma~\ref{basic}, we must count  the number of the sequences $(i_1,\ldots, i_\xi)$ that meet conditions (1) and (2), as described in Lemma~\ref{newass}. This counting problem is purely combinatorial and will be solved in the next two results. Note that for a finite set $A$, $|A|$ denotes the cardinal number of $A$, and for a positive integer $m$, $[m]$ denotes the set $\{1,\ldots,m\}$.

    \begin{Lemma} Given $\xi\geq 1$ and $t\geq 1$, let $T_{\xi,t}$ denote the set of integral sequences $i_1<\cdots<i_\xi$ such that  \begin{enumerate}
        \item[(1)] for $1\le j\le \xi$, one has $(j-1)t+1\le i_j\le jt$;
        \item[(2)] for $1\le j\le \xi-1$, one has  $i_{j+1}-i_j\le t$.
    \end{enumerate}
    Then $$|T_{\xi,t}|=\begin{pmatrix}\xi+t-1\\\xi\end{pmatrix}.$$
    \end{Lemma}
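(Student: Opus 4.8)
The plan is to set up an explicit change of variables that collapses the two constraints defining $T_{a,t}$ into a single, easily-countable monotonicity condition. Specifically, I would introduce $y_j := i_j - (j-1)t$ for $j=1,\ldots,a$ and record what each hypothesis becomes under this shift. Condition (1), namely $(j-1)t+1\le i_j\le jt$, translates directly into $1\le y_j\le t$, so that each $y_j$ ranges over $\{1,\ldots,t\}$. The next step is to verify that the strict increase $i_j<i_{j+1}$ built into the notion of a sequence is automatic after the substitution: it reads $y_j-y_{j+1}<t$, and since $y_j\le t$ while $y_{j+1}\ge 1$ this holds for free, so it imposes no extra constraint and creates no risk of overcounting.

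The heart of the reduction is translating condition (2). Rewriting $i_{j+1}-i_j\le t$ in terms of the $y$'s gives $y_{j+1}-y_j+t\le t$, i.e. $y_{j+1}\le y_j$. Hence $(i_1,\ldots,i_a)\in T_{a,t}$ if and only if the corresponding tuple $(y_1,\ldots,y_a)$ is a weakly decreasing sequence with all entries in $\{1,\ldots,t\}$. To make this a genuine bijection I would also check the reverse direction: given any weakly decreasing $(y_j)$ with $y_j\in\{1,\ldots,t\}$, one has $i_{j+1}-i_j=y_{j+1}-y_j+t$, which lies in $[1,t]$, so that $(i_j)$ is strictly increasing and satisfies both (1) and (2). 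Thus $(i_j)\mapsto(y_j)$ is a bijection between $T_{a,t}$ and the set of weakly decreasing sequences of length $a$ with values in $\{1,\ldots,t\}$.

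It then remains to count such sequences $y_1\ge y_2\ge\cdots\ge y_a$ with $y_j\in\{1,\ldots,t\}$. A weakly decreasing sequence is precisely the data of a size-$a$ multiset drawn from the $t$-element set $\{1,\ldots,t\}$, so by the standard stars-and-bars argument there are $\binom{a+t-1}{a}$ of them, which is the claimed value.

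As for the main obstacle: there is essentially no deep difficulty here, since once the shift $y_j=i_j-(j-1)t$ is written down both constraints reduce to elementary inequalities. The only point requiring genuine care is the bookkeeping around the strict inequality $i_j<i_{j+1}$: one must confirm that it is automatically satisfied in both directions (rather than being an independent constraint) so that the bijection is clean and the final multiset count is neither inflated nor deflated.
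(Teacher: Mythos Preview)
Your argument is correct. The substitution $y_j=i_j-(j-1)t$ does exactly what you claim: condition (1) becomes $y_j\in\{1,\ldots,t\}$, condition (2) becomes $y_{j+1}\le y_j$, and the strict monotonicity of the $i_j$ is automatic in both directions since $i_{j+1}-i_j=y_{j+1}-y_j+t\in[1,t]$ whenever $y_{j+1}\le y_j$ with both in $\{1,\ldots,t\}$. The final count via multisets is standard.

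Your route is genuinely different from the paper's, and in fact considerably shorter. The paper proceeds by induction on $t$: it stratifies $T_{a,t}$ according to the first index $k$ at which $i_k$ attains its minimum value $(k-1)t+1$, observes that once this happens all later $i_j$ are forced to their minima as well, and then uses the shift $i_j'=i_j-j$ on the initial segment $i_1,\ldots,i_{k-1}$ to identify each stratum with $T_{k-1,t-1}$. Summing $\binom{k+t-3}{k-1}$ over $k=1,\ldots,a+1$ and repeatedly applying Pascal's identity yields $\binom{a+t-1}{a}$. Your single change of variables $y_j=i_j-(j-1)t$ bypasses both the induction and the binomial telescoping, landing directly on the multiset count; what you lose is only the structural observation about how $T_{a,t}$ decomposes into copies of smaller $T_{k-1,t-1}$, which the paper does not use elsewhere.
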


    \begin{proof}
    We proceed by induction on  $t$. If $t=1$, the result follows easily. Assume now that  $t\geq 2$.  For each sequence $(i_1,i_2,\ldots, i_\xi)\in T_{\xi,t}$, we define $$\tau(i_1,\ldots,i_\xi):=\min\{j\in [\xi]\:\; i_j=(j-1)t+1\}.$$
    By convention,   we set $\tau(i_1,\ldots,i_\xi)=\xi+1$ if $i_j\neq (j-1)t+1$ for all $j=1,\ldots,\xi$.
    Thus, $\tau(i_1,\ldots,i_\xi)\in \{1,\ldots,\xi,\xi+1\}$. We make the following easy observation:

     {\em if $\tau(i_1,\ldots,i_\xi)=k<\xi$, then $i_{j}=(j-1)t+1$ for $j=k,\ldots,\xi$.}

     For $k\in [\xi+1]$, we define $$T_{\xi,t}^{k}=\{(i_1,i_2,\ldots, i_\xi)\in T_{\xi,t}\:\; \tau(i_1,\ldots,i_\xi)=k\}.$$

     One easily sees that $|T_{\xi,t}^1|=1$ and $|T_{\xi,t}^2|=|\{i_1: 2\leq i_1\leq t\}|=t-1$.

    In general, for all $k\in  [\xi+1]$, we use $T_k$ to denote the set of all sequences $i_1<i_2<\cdots<i_{k-1}$ such that  $(j-1)t+2\leq i_j\leq jt$ for  $j=1,\ldots,k-1,$  and $i_{j+1}-i_j\leq t$ for $j=1,\ldots,k-2.$  It follows from the above observation that   $|T_{\xi,t}^{k}|=|T_k|$ for $k=1,\ldots,\xi+1$.

     Set $i_j'=i_j-j$ for $j=1,\ldots, k-1$. Then,   we have for  $j=1,\ldots,k-1$,   $$(j-1)t+2\leq i_j\leq jt \Longleftrightarrow(j-1)(t-1)+1\leq i_j'\leq j(t-1);$$   and for $j=1,\ldots,k-2$,  $$i_{j+1}-i_j\leq t \Longleftrightarrow i'_{j+1}-i'_j\leq t-1.$$ Thus, $$(i_1,\ldots,i_{k-1})\in T_k \Longleftrightarrow (i_1',\ldots,i_{k-1}')\in T_{t-1,k-1}.$$  This implies the following map: $$(i_1,\ldots,i_{k-1})\longmapsto (i_1', \ldots, i_{k-1}') $$ establishes a bijection between $T_k$ and $T_{k-1,t-1}$. Hence, $|T_k|$ is equal to the number of $T_{k-1,t-1}$. By the inductive hypothesis, we have $|T_{k-1,t-1}|=\begin{pmatrix}k+t-3\\ k-1\end{pmatrix}.$
     Hence, it follows that
    \begin{align*}|T_{\xi,t}|
        &=\mathop{\sum}\limits_{k=1}^{\xi+1}|T^k_{\xi,t}|=\mathop{\sum}\limits_{k=1}^{\xi+1}\begin{pmatrix}k+t-3\\k-1\end{pmatrix}\\
        &=\begin{pmatrix}t-2\\0\end{pmatrix}+\begin{pmatrix}t-1\\1\end{pmatrix}+\begin{pmatrix}t\\2\end{pmatrix}+\cdots+\begin{pmatrix}\xi+t-3\\ \xi-1\end{pmatrix}+\begin{pmatrix}\xi+t-2\\ \xi\end{pmatrix}\\
        &=\begin{pmatrix}t-1\\0\end{pmatrix}+\begin{pmatrix}t-1\\1\end{pmatrix}+\begin{pmatrix}t\\2\end{pmatrix}+\cdots+\begin{pmatrix}\xi+t-3\\ \xi-1\end{pmatrix}+\begin{pmatrix}\xi+t-2\\ \xi\end{pmatrix}\\
        &=\begin{pmatrix}t\\1\end{pmatrix}+\begin{pmatrix}t\\2\end{pmatrix}+\cdots+\begin{pmatrix}\xi+t-3\\ \xi-1\end{pmatrix}+\begin{pmatrix}\xi+t-2\\ \xi\end{pmatrix}\\
        &=\begin{pmatrix}\xi+t-2\\ \xi-1\end{pmatrix}+\begin{pmatrix}\xi+t-2\\ \xi\end{pmatrix}\\
        &=\begin{pmatrix}\xi+t-1\\ \xi\end{pmatrix}.
    \end{align*}
 Here, we use the formula $\binom{n}{k}+\binom{n}{k+1}=\binom{n+1}{k+1}$ many times.
    \end{proof}
   \begin{Corollary} \label{number2} Given $n\geq t\geq 1$ and assume $n=\xi t+\theta$  with $0\leq \theta \leq t-1$.  Let $X_{n,t}$ denote the set of integral sequences $i_1<\ldots<i_a$ such that  \begin{enumerate}
        \item[(1)] for $1\le j\le \xi$, one has $(j-1)t+\theta+1\le i_j\le jt$;
        \item[(2)] for $1\le j\le \xi-1$, one has  $i_{j+1}-i_j\le t$.
    \end{enumerate}
    Then $$|X_{n,t}|=\begin{pmatrix}\xi+t-\theta-1\\ \xi\end{pmatrix}.$$
    \end{Corollary}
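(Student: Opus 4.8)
The plan is to reduce the general count $|X_{n,t}|$ to the case $b=0$ already settled in the preceding lemma, where $|T_{a,s}|=\binom{a+s-1}{a}$. Since the target formula $\binom{a+t-b-1}{a}$ is exactly $|T_{a,t-b}|$, it suffices to produce an explicit bijection $X_{n,t}\to T_{a,t-b}$ and then quote the lemma with $t$ replaced by $t-b$.

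The key idea is the substitution $i_j'=i_j-jb$ for $j=1,\dots,a$; that is, I would shift the $j$-th term down by $jb$. First I would check that this carries condition (1): subtracting $jb$ from $(j-1)t+b+1\le i_j\le jt$ gives $(j-1)(t-b)+1\le i_j'\le j(t-b)$, which is precisely condition (1) of $T_{a,t-b}$. Next, condition (2) transforms cleanly as well, since $i_{j+1}'-i_j'=(i_{j+1}-i_j)-b$, so $i_{j+1}-i_j\le t$ becomes $i_{j+1}'-i_j'\le t-b$. Hence $(i_1,\dots,i_a)\in X_{n,t}$ if and only if $(i_1',\dots,i_a')\in T_{a,t-b}$, and since $i_j\mapsto i_j-jb$ is invertible (inverse $i_j'\mapsto i_j'+jb$), it is a bijection.

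One point I would be careful about is the strict-increase requirement $i_1<\cdots<i_a$ built into both definitions, because a downward shift by $jb$ could in principle collapse a gap and destroy monotonicity. In fact this worry evaporates: the strict increase is automatic from condition (1) in either set, since the ranges $[(j-1)s+1,js]$ for consecutive $j$ are disjoint with the later one lying entirely above the earlier, forcing $i_{j+1}>i_j$. So I need not track monotonicity separately under the substitution. With that observed, applying the preceding lemma (legitimate since $t-b\ge 1$ because $b\le t-1$, and $a\ge 1$) yields $|X_{n,t}|=|T_{a,t-b}|=\binom{a+(t-b)-1}{a}$, as claimed.

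The only genuinely non-routine step is discovering the right substitution $i_j'=i_j-jb$; everything after it is mechanical verification. The reason this particular shift works is that it simultaneously absorbs the extra offset $b$ in the lower bound of condition (1) and reduces the allowed spacing from $t$ to $t-b$, matching the two defining constraints of $T_{a,t-b}$ at once. An alternative would be to iterate the lemma's own reduction $i_j'=i_j-j$ a total of $b$ times, but the single substitution above is cleaner and makes the bijection transparent in one stroke.
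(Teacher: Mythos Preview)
Your proposal is correct and follows essentially the same route as the paper: the paper uses the identical substitution $i_j'=i_j-bj$, verifies the same two equivalences of conditions, and concludes that the map is a bijection $X_{n,t}\to T_{a,t-b}$, yielding $|X_{n,t}|=\binom{a+t-b-1}{a}$. Your additional remark that the strict monotonicity is automatic from condition~(1) is a nice clarification the paper leaves implicit.
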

    \begin{proof} Set $i_j'=i_j-\theta j$. Then, we have  for $1\le j\le \xi$,
    $$ (j-1)t+\theta+1\le i_j\le jt \Longleftrightarrow  (j-1)(t-\theta)+1\leq  i_j'\leq j(t-\theta),$$
    and for $1\le j\le \xi-1$,
    $$i_{j+1}-i_j\le t \Longleftrightarrow i_{j+1}'-i_j'\le t-\theta.$$
    Thus, the map: $(i_1,\ldots, i_\xi)\longmapsto (i_1',\ldots, i_\xi')$ establishes a bijection between
$X_{n,t}$ and $T_{\xi,t-\theta}$.    This implies $$|X_{n,t}|=|T_{\xi,t-\theta}|=\begin{pmatrix}\xi+t-\theta-1\\ \xi\end{pmatrix},$$ as desired.
    \end{proof}

 \begin{Proposition} \label{squarefree-mult} Assume $n=\xi t+\theta$ with $\xi\ge 1$ and $0\leq \theta<t$. Then  $$\mult(S/I_t(L_n))=\begin{pmatrix}\xi+t-\theta-1\\ \xi\end{pmatrix}.$$
\end{Proposition}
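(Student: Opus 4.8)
The plan is to assemble the ingredients already established, since the genuine difficulty has been isolated into the combinatorial lemmas that precede this statement. By Lemma~\ref{basic}, the multiplicity $\mult(S/I_t(L_n))$ equals the number of minimal prime ideals of $I_t(L_n)$ whose height equals $\height(I_t(L_n))$, and by Lemma~\ref{height} this common height is exactly $a$. Since $I_t(L_n)$ is a squarefree monomial ideal it is radical, so its associated primes coincide with its minimal primes; hence the minimal primes of height $a$ are precisely the members of $\mathrm{ass}(I_t(L_n))$ of height $a$.

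Next I would identify these height-$a$ primes combinatorially. A monomial prime $\p=(x_{i_1},\ldots,x_{i_a})$ with $1\le i_1<\cdots<i_a\le n$ is generated by exactly $a$ variables and therefore has height $a$; conversely, every height-$a$ monomial prime is of this shape. By Lemma~\ref{newass}, such a $\p$ lies in $\mathrm{ass}(I_t(L_n))$ if and only if the index sequence $i_1<\cdots<i_a$ satisfies conditions (1) and (2) of that lemma, namely $(j-1)t+b+1\le i_j\le jt$ for $1\le j\le a$ and $i_{j+1}-i_j\le t$ for $1\le j\le a-1$. These are verbatim the defining conditions of the set $X_{n,t}$ introduced in Corollary~\ref{number2}. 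Thus the assignment $\p\mapsto(i_1,\ldots,i_a)$ is a bijection between the minimal primes of height $a$ and the set $X_{n,t}$, giving $\mult(S/I_t(L_n))=|X_{n,t}|$.

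Finally I would invoke the enumeration already carried out: Corollary~\ref{number2} yields $|X_{n,t}|=\binom{a+t-b-1}{a}$, which is exactly the claimed formula.

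I do not expect a serious obstacle here, because the substantive content—the combinatorial count—has already been handled in the computation of $|T_{a,t}|$ and its reduction to $|X_{n,t}|$ via the change of variables $i_j'=i_j-bj$ in Corollary~\ref{number2}. The proposition is therefore a direct synthesis of Lemmas~\ref{basic}, \ref{height}, and~\ref{newass} together with Corollary~\ref{number2}. The only point deserving a moment's care is the verification that Lemma~\ref{newass} captures \emph{all} height-$a$ minimal primes and no others, i.e.\ that minimality of height forces the prime to be generated by exactly $a$ variables; this is immediate from the correspondence between monomial primes generated by $a$ variables and monomial primes of height $a$.
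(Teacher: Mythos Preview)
Your proposal is correct and follows exactly the paper's own proof, which simply states that the result is a combination of Lemma~\ref{basic}, Lemma~\ref{newass}, and Corollary~\ref{number2}. Your additional invocation of Lemma~\ref{height} and the remark that associated primes equal minimal primes for squarefree monomial ideals make explicit what the paper leaves implicit, but the argument is the same synthesis of these ingredients.
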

\begin{proof} This  is a combination of Lemma~\ref{basic}, Lemma~\ref{newass}, and Corollary~\ref{number2}.
\end{proof}

Based on Proposition~\ref{squarefree-mult}, we can obtain our main result through an induction process. In this process, constructing suitable short exact sequences is the crucial step. In order to achieve this, we require the following lemmas.

Recall the {\it support} of monomial $u$ is the set $\{i\in [n]\:\; x_i \mbox{ divides } u\}$. For a monomial ideal $I$, we define the {\it support} of $I$ to be the union of supports of minimal generators of $I$. If monomial ideals $J$ and $K$ have disjoint supports, then $J:K=J$. Thus, if $I$ is another monomial ideal,  then we have $(I+J):K=(I:K)+(J:K)=(I:K)+J.$   This fact is utilized  in the following proofs without explicit reference.

\begin{Lemma} \label{multpower1}
 For all $s\geq 2$, we have
 \begin{enumerate}
   \item $I_t(L_n)^s:u_{n-t+1}=I_t(L_n)^{s-1}$;
   \vspace{2mm}

   \item Let $(I_t(L_{n-1})^s:u_{n-t+1}, x_{n-t})$ denote the ideal $(I_t(L_{n-1})^s:u_{n-t+1})+ (x_{n-t})$. Then $$(I_t(L_{n-1})^s:u_{n-t+1}, x_{n-t})=(I_t(L_{n-t-1})^s, x_{n-t}).$$ In particular, if $n\leq 2t$, then $I_t(L_{n-t-1})^s=0$ and so $(I_t(L_{n-1})^s:u_{n-t+1}, x_{n-t})=(x_{n-t}).$
 \end{enumerate}
              \end{Lemma}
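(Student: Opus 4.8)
The plan is to handle the two parts by different elementary means: part (1) through the symbolic-power description provided by normal torsion-freeness, and part (2) through a direct divisibility argument in which I reduce modulo $x_{n-t}$ only at the end.

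For part (1), write $I=I_t(L_n)$ and $u=u_{n-t+1}=x_{n-t+1}\cdots x_n$. The inclusion $I^{s-1}\subseteq I^s:u$ is immediate since $u\in I$. For the reverse inclusion I would use that $I$ is normally torsion-free (established above), so $I^s$ equals its symbolic power, which for a squarefree monomial ideal is $\bigcap_{F\in C_{n,t}}\mathfrak{p}_F^{\,s}$ by Theorem~\ref{C_{n,t}}. Since colon ideals distribute over intersections, it suffices to compute $\mathfrak{p}_F^{\,s}:u$ for each minimal vertex cover $F$. Because membership of a monomial $v$ in $\mathfrak{p}_F^{\,s}$ depends only on $\sum_{i\in F}\deg_{x_i}(v)\ge s$, one has $\mathfrak{p}_F^{\,s}:u=\mathfrak{p}_F^{\,s-c_F}$, where $c_F=\lvert F\cap\{n-t+1,\ldots,n\}\rvert$ counts the $\mathfrak{p}_F$-variables dividing $u$. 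The crucial observation is that $c_F=1$ for every $F\in C_{n,t}$: condition (6) of Notation~\ref{2.6} forces the largest element of $F$ into $\{n-t+1,\ldots,n\}$, while condition (5) forces every other element to be at most $n-t$. Hence $\mathfrak{p}_F^{\,s}:u=\mathfrak{p}_F^{\,s-1}$ for all $F$ (no boundary issue since $s\ge 2$), and reassembling gives $I^s:u=\bigcap_F\mathfrak{p}_F^{\,s-1}=I^{s-1}$, again by normal torsion-freeness.

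For part (2), recall $I_t(L_{n-1})=(u_1,\ldots,u_{n-t})$ and $I_t(L_{n-t-1})=(u_1,\ldots,u_{n-2t})$, the latter having support inside $\{1,\ldots,n-t-1\}$, which is disjoint from $\mathrm{supp}(u)=\{n-t+1,\ldots,n\}$. The inclusion $\supseteq$ is easy: from $I_t(L_{n-t-1})\subseteq I_t(L_{n-1})$ one gets $I_t(L_{n-t-1})^s\cdot u\subseteq I_t(L_{n-1})^s$, hence $I_t(L_{n-t-1})^s\subseteq I_t(L_{n-1})^s:u$, and adding $(x_{n-t})$ preserves this. For $\subseteq$, it suffices to take a monomial $w\in I_t(L_{n-1})^s:u$ with $x_{n-t}\nmid w$ and show $w\in I_t(L_{n-t-1})^s$. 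Choosing an expression $u_{j_1}\cdots u_{j_s}\mid wu$ with each $1\le j_k\le n-t$, I would first note $x_{n-t}\nmid wu$, so no factor $u_{j_k}$ can contain $x_{n-t}$; since $u_{j_k}$ contains $x_{n-t}$ exactly when $n-2t+1\le j_k\le n-t$, this forces every $j_k\le n-2t$, i.e.\ each $u_{j_k}$ is a generator of $I_t(L_{n-t-1})$ with support in $\{1,\ldots,n-t-1\}$. Then $\prod_k u_{j_k}$ is coprime to $u$, so $\prod_k u_{j_k}\mid wu$ upgrades to $\prod_k u_{j_k}\mid w$, whence $w\in I_t(L_{n-t-1})^s$.

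The ``in particular'' clause is then immediate: when $n\le 2t$ one has $n-t-1<t$, so $L_{n-t-1}$ carries no $t$-path and $I_t(L_{n-t-1})=0$, leaving $(I_t(L_{n-1})^s:u_{n-t+1},x_{n-t})=(x_{n-t})$. I expect the main obstacle to be the harder containment of part (2), where the support bookkeeping must be run carefully: the whole point is that the hypothesis $x_{n-t}\nmid w$ is exactly what pins every generator appearing in a product expression of $wu$ into $I_t(L_{n-t-1})$, after which the disjoint-support divisibility step closes the argument. Part (1) is comparatively routine once the count $c_F=1$ is extracted from conditions (5) and (6) of Notation~\ref{2.6}.
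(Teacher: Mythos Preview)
Your proof is correct, but both parts take a different route from the paper. For part~(1) the paper simply cites \cite[Lemma~2.1]{BC23}, whereas you give a self-contained argument via the identification $I^s=I^{(s)}=\bigcap_{F\in C_{n,t}}\mathfrak{p}_F^{\,s}$ coming from normal torsion-freeness, together with the observation that conditions~(5) and~(6) of Notation~\ref{2.6} force each minimal vertex cover to meet $\{n-t+1,\ldots,n\}$ in exactly one element; this has the pleasant feature of keeping the argument internal to the paper, at the cost of invoking the symbolic-power description. For part~(2) the paper works more abstractly: it first rewrites $(J:u)+(x_{n-t})=(J,x_{n-t}):u$ (valid since $x_{n-t}\nmid u$), then notes $(I_t(L_{n-1})^s,x_{n-t})=(I_t(L_{n-t-1})^s,x_{n-t})$, and finally uses that this last ideal has support disjoint from $u$ so that the colon by $u$ is trivial. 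Your hands-on divisibility argument reaches the same conclusion by tracking each factor $u_{j_k}$ in a product divisor of $wu$; the key step---that $x_{n-t}\nmid wu$ forces every $j_k\le n-2t$---is precisely the monomial-level content of the paper's equality $(I_t(L_{n-1})^s,x_{n-t})=(I_t(L_{n-t-1})^s,x_{n-t})$. Both arguments ultimately rest on the same support separation; the paper's is more compact, yours more explicit.
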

   \begin{proof} (1) is just \cite[Lemma 2.1]{BC23}.

   (2) Since $x_{n-t}$ does not divide $u_{n-t+1}$, it follows that $$(I_t(L_{n-1})^s:u_{n-t+1})+ (x_{n-t})=(I_t(L_{n-1})^s, x_{n-t}):u_{n-t+1}.$$
   It is easy to see that $(I_t(L_{n-1})^s, x_{n-t})=(I_t(L_{n-t-1})^s, x_{n-t}).$  Note that  $(I_t(L_{n-t-1})^s, x_{n-t})$ and $u_{n-t+1}$ have disjoint supports, the result follows.
   \end{proof}

        \begin{Lemma} \label{multpower2}
 Let $A_i:=(I_t(L_{n-i})^s,x_{n-t+1}\cdots x_{n-i+1})$ and let $B_i:=(A_i,x_{n-i+1})$ for $i=1,\ldots, t$. Then $B_i=(I_t(L_{n-i})^s, x_{n-i+1})$ and $A_i:x_{n-i+1}=A_{i+1}$ for $i=1,\ldots, t$.
\end{Lemma}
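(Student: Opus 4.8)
The plan is to prove both assertions by elementary manipulation of monomial ideals, using only the two distributivity facts for colon ideals recalled just before the lemma together with support and divisibility considerations. Throughout I would write $w_i:=x_{n-t+1}\cdots x_{n-i+1}$ for the monomial appended in the definition of $A_i$, so that $A_i=(I_t(L_{n-i})^s,w_i)$ and $\supp(w_i)=\{n-t+1,\ldots,n-i+1\}$; the assumption $1\le i\le t$ guarantees $w_i$ is a genuine squarefree monomial, with $w_t=x_{n-t+1}$ at one extreme and $w_1=u_{n-t+1}$ at the other. The identity $B_i=(I_t(L_{n-i})^s,x_{n-i+1})$ is then immediate: since $x_{n-i+1}$ is the last variable dividing $w_i$, the generator $w_i$ is a multiple of $x_{n-i+1}$ and is therefore redundant in $B_i=(I_t(L_{n-i})^s,\,w_i,\,x_{n-i+1})$.

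For the colon identity I would first compute $A_i:x_{n-i+1}$ by distributing the colon over the sum. The support of $I_t(L_{n-i})$ is contained in $\{1,\ldots,n-i\}$, hence disjoint from $\{n-i+1\}$, so the disjoint-support rule gives $I_t(L_{n-i})^s:x_{n-i+1}=I_t(L_{n-i})^s$. On the other hand $x_{n-i+1}$ divides $w_i$, and $w_i/x_{n-i+1}=x_{n-t+1}\cdots x_{n-i}=w_{i+1}$, so $(w_i):x_{n-i+1}=(w_{i+1})$. Combining the two yields $A_i:x_{n-i+1}=I_t(L_{n-i})^s+(w_{i+1})$.

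It then remains to identify this ideal with $A_{i+1}=I_t(L_{n-i-1})^s+(w_{i+1})$, i.e. to prove $I_t(L_{n-i})^s+(w_{i+1})=I_t(L_{n-i-1})^s+(w_{i+1})$. The inclusion $\supseteq$ is clear because $I_t(L_{n-i-1})\subseteq I_t(L_{n-i})$. For the reverse inclusion, the key observation is that $I_t(L_{n-i})$ differs from $I_t(L_{n-i-1})$ only by the single extra generator $u_{n-i-t+1}=x_{n-i-t+1}\cdots x_{n-i}$, and that \emph{$w_{i+1}$ divides this extra generator}: indeed $\{n-t+1,\ldots,n-i\}\subseteq\{n-i-t+1,\ldots,n-i\}$, so $w_{i+1}=x_{n-t+1}\cdots x_{n-i}$ is a divisor of $u_{n-i-t+1}$. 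Consequently any generator $u_{j_1}\cdots u_{j_s}$ of $I_t(L_{n-i})^s$ either uses only indices $j_k\le n-i-t$, in which case it lies in $I_t(L_{n-i-1})^s$, or it is divisible by $u_{n-i-t+1}$ and hence by $w_{i+1}$, in which case it lies in $(w_{i+1})$. This supplies the missing inclusion and hence the colon identity.

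For the boundary index $i=t$ one invokes the empty-product convention $w_{t+1}=1$, so that $A_{t+1}=S$; this is consistent with the direct computation $A_t:x_{n-t+1}=S$, valid because $x_{n-t+1}=w_t\in A_t$. I expect the only real content of the argument to be the divisibility $w_{i+1}\mid u_{n-i-t+1}$, which is precisely the mechanism that lets the ``tail'' generator $u_{n-i-t+1}$ of $I_t(L_{n-i})^s$ be absorbed into $(w_{i+1})$ when one passes from $A_i$ to $A_{i+1}$; everything else is routine bookkeeping with supports and colons.
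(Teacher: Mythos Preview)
Your proof is correct and follows essentially the same route as the paper's. Both arguments hinge on the single observation that $w_{i+1}=x_{n-t+1}\cdots x_{n-i}$ divides the extra generator $u_{n-i-t+1}$, which lets the part of $I_t(L_{n-i})^s$ not already in $I_t(L_{n-i-1})^s$ be absorbed into $(w_{i+1})$; the paper writes the decomposition $I_t(L_{n-i})^s=\bigl(I_t(L_{n-i-1})^s,\,u_{n-i-t+1}I_t(L_{n-i})^{s-1}\bigr)$ explicitly and then drops the second summand, whereas you compute the colon first and then split the generators, but the content is identical.
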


\begin{proof}  The equality that $B_i=(I_t(L_{n-i})^s, x_{n-i+1})$ is easy.
Let us show that $A_i:x_{n-i+1}=A_{i+1}$. Since $$I_t(L_{n-i})^s=(I_t(L_{n-i-1})^s,u_{n-i-t+1}I_t(L_{n-i})^{s-1}),$$ we have
    \begin{align*}
      &A_i:x_{n-i+1}=(I_t(L_{n-i})^s,x_{n-t+1}\cdots x_{n-i+1}):x_{n-i+1}\\
      &=(I_t(L_{n-i-1})^s,u_{n-i-t+1}I_t(L_{n-i})^{s-1}:x_{n-i+1})+(x_{n-t+1}\cdots x_{n-i})\\
      &=(I_t(L_{n-i-1})^s,u_{n-i-t+1}I_t(L_{n-i})^{s-1}, x_{n-t+1}\cdots x_{n-i}):x_{n-i+1}\\
      &=(I_t(L_{n-i-1})^s, x_{n-t+1}\cdots x_{n-i}):x_{n-i+1}=A_{i+1}.
    \end{align*}
Here, the third equality follows because $ x_{n-t+1}\cdots x_{n-i}$ divides $u_{n-i-t+1}$.
   \end{proof}

    Let $T$ be a finitely generated graded $S$-modules of dimension $d$. According to  \cite[Corollary 4.1.8]{BH98},   we can express the Hilbert series of $T$ as  $$\HS(T,z)=\frac{Q(T,z)}{(1-z)^d}$$ for some $Q(T,z)\in \mathbb{Q}[z,z^{-1}]$ with $Q(T,1)\neq 0$. Furthermore, \cite[Proposition 4.1.9]{BH98} asserts $$\mult(T)=Q(T,1).$$  Based on these results, we can promptly derive the subsequent lemma.

   \begin{Lemma} \label{mult} Let $0\longrightarrow K\longrightarrow M\longrightarrow N\longrightarrow 0$  be a short exact sequence of  finitely generated graded $S$-modules. Then $\dim M=\max\{\dim K, \dim N\}$. Moreover,
   \begin{enumerate} \item If $\dim M=\dim K=\dim N$, then $\mult (M)=\mult (N)+\mult (K)$;
   \item If $\dim M=\dim K>\dim N$, then $\mult (M)=\mult (K)$;
   \item If $\dim M=\dim N>\dim K$, then $\mult (M)=\mult (N)$.
   \end{enumerate}
   \end{Lemma}
We  also need  the following fact: given that $I$ is a monomial ideal of $S$ and $y$ is a new variable, then $$\mult (S/I)=\mult (S[y]/IS[y]).$$

We  are  now ready to present the main result of this paper.
   \begin{Theorem} \label{main}
     Assume $n=\xi t+\theta$, with $\xi\ge 1$ and $0\le \theta<t$. Then, for all $s\ge 1$, we have $$\mult (S/I_t(L_n)^s)=\begin{pmatrix}s+\xi-1\\s-1\end{pmatrix}\begin{pmatrix}\xi+t-\theta-1\\ \xi\end{pmatrix}.$$
\end{Theorem}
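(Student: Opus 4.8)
The plan is to establish the formula by a double induction on the pair $(a,s)$, with the outer induction on $a$ and the inner one on $s$. It is convenient first to extend the statement to every $m\ge 0$, writing $m=a_m t+b_m$ with $0\le b_m<t$ and adopting the convention $I_t(L_m)=0$ (so that $\mult(S/I_t(L_m)^s)=1$) when $m<t$; this is exactly the value the proposed formula returns when $a_m=0$. The base cases are then immediate: for $a=0$ the quotient is a polynomial ring of multiplicity $1$, and the case $s=1$ is Proposition~\ref{squarefree-mult}. For the inductive step, assuming $a\ge 1$ and $s\ge 2$, I would begin with the short exact sequence
\begin{equation*}
0\longrightarrow \bigl(S/I_t(L_n)^{s-1}\bigr)(-t)\xrightarrow{\ \cdot u_{n-t+1}\ } S/I_t(L_n)^s\longrightarrow S/A_1\longrightarrow 0,
\end{equation*}
where $A_1=I_t(L_n)^s+(u_{n-t+1})$; the identification of the kernel uses Lemma~\ref{multpower1}(1), which gives $I_t(L_n)^s:u_{n-t+1}=I_t(L_n)^{s-1}$. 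Because $\height I_t(L_n)=a$ by Lemma~\ref{height}, all three modules have dimension $n-a$, and since the multiplicity is insensitive to the degree shift, additivity on the sequence yields $\mult(S/I_t(L_n)^s)=\mult(S/I_t(L_n)^{s-1})+\mult(S/A_1)$.

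The core of the proof is the evaluation of $\mult(S/A_1)$ through the telescoping chain of Lemma~\ref{multpower2}. Since $I_t(L_n)=(I_t(L_{n-1}),u_{n-t+1})$, reducing modulo $(u_{n-t+1})$ gives $A_1=I_t(L_{n-1})^s+(u_{n-t+1})$, which is precisely the ideal $A_1$ of that lemma; the colon relations $A_i:x_{n-i+1}=A_{i+1}$ then produce short exact sequences
\begin{equation*}
0\longrightarrow (S/A_{i+1})(-1)\xrightarrow{\ \cdot x_{n-i+1}\ } S/A_i\longrightarrow S/B_i\longrightarrow 0,\qquad i=1,\dots,t,
\end{equation*}
with $B_i=(I_t(L_{n-i})^s,x_{n-i+1})$ and with $A_{t+1}=S$, so the chain terminates in the zero module. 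As $x_{n-i+1}$ has support disjoint from $I_t(L_{n-i})$, the quotient $S/B_i$ is $S/I_t(L_{n-i})^s$ modulo the extra variable together with several free variables, whence $\mult(S/B_i)=\mult(S/I_t(L_{n-i})^s)$. The decisive bookkeeping is a dimension count: writing $n-i=a't+b'$, Lemma~\ref{height} gives $\dim S/B_i=n-1-a'$, and one checks that $\dim S/B_i=n-a$ precisely when $b<i\le t$ (where $a'=a-1$), while $\dim S/B_i=n-1-a<n-a$ when $1\le i\le b$. Iterating additivity down the chain and discarding the lower-dimensional quotients (and the vanishing tail) leaves
\begin{equation*}
\mult(S/A_1)=\sum_{i=b+1}^{t}\mult(S/I_t(L_{n-i})^s).
\end{equation*}

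Each summand has $a'=a-1<a$, so the outer induction hypothesis applies, giving $\mult(S/I_t(L_{n-i})^s)=\binom{s+a-2}{s-1}\binom{a+i-b-2}{a-1}$ after substituting $b'=t+b-i$. Factoring out $\binom{s+a-2}{s-1}$ and reindexing, the problem reduces to the single sum $\sum_{i=b+1}^{t}\binom{a+i-b-2}{a-1}$, which collapses by the hockey-stick identity to $\binom{a+t-b-1}{a}=\mult(S/I_t(L_n))$. Therefore $\mult(S/A_1)=\binom{s+a-2}{s-1}\binom{a+t-b-1}{a}$, and combining this with the inner induction hypothesis $\mult(S/I_t(L_n)^{s-1})=\binom{s+a-2}{s-2}\binom{a+t-b-1}{a}$ and Pascal's rule $\binom{s+a-2}{s-2}+\binom{s+a-2}{s-1}=\binom{s+a-1}{s-1}$ yields the claimed formula. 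I expect the main obstacle to lie in this dimension bookkeeping of the telescoping chain rather than in any single algebraic identity: one must confirm that $\dim S/A_1=n-a$ so that the first sequence is genuinely additive, determine exactly which of the $S/B_i$ carry the top dimension $n-a$, and argue that the remaining lower-dimensional quotients together with $S/A_{t+1}=0$ contribute nothing to $\mult(S/A_1)$; only once this is pinned down does the clean sum over $i=b+1,\dots,t$ emerge and the hockey-stick collapse become available.
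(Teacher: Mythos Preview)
Your argument is correct and in fact more streamlined than the paper's own. The paper also inducts on $s$ and opens with the same sequence $0\to (S/I_t(L_n)^{s-1})(-t)\to S/I_t(L_n)^s\to S/A_1\to 0$, but it then bifurcates on the residue $b$. When $b=0$ it runs exactly the telescoping chain of Lemma~\ref{multpower2} over all $i=1,\dots,t$ (there every $S/B_i$ already has the top dimension $n-a$), whereas for $b>0$ it abandons that chain and instead combines three auxiliary sequences built around $I_t(L_{n-1})$ and Lemma~\ref{multpower1}(2) to obtain a four-term recursion among $\mult(S/I_t(L_n)^{s-1})$, $\mult(S_{n-1}/I_t(L_{n-1})^s)$, $\mult(S_{n-1}/I_t(L_{n-1})^{s-1})$ and $\mult(S_{n-t-1}/I_t(L_{n-t-1})^s)$; the induction there is on $n$ rather than on $a$. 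You instead run the telescope for every $b$ and absorb the case distinction into the dimension count, observing that $\dim S/B_i$ drops to $n-a-1$ precisely for $i\le b$, so that only the summands with $i=b+1,\dots,t$ (each with $a'=a-1$) survive and the induction becomes a clean descent in $a$. The trade-off is that the paper, working throughout with the numerators $\mathbf{Q}(-,z)$ over a common denominator $(1-z)^{n-a}$, never has to invoke the principle that lower-dimensional pieces contribute zero to the top multiplicity; in exchange you eliminate the case split on $b$ and the second family of exact sequences entirely, and the single hockey-stick sum you obtain is shorter than either of the paper's two computations.
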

\begin{proof} We proceed by induction on both $s$ and $n$.
     The case that $s=1$ follows from Proposition~\ref{squarefree-mult}.  If $n=t$, we have $$\HS(S/I_t(L_t)^s,z)=\frac{1-z^{st}}{(1-z)^t}=\frac{1}{(1-z)^{t-1}}\sum_{i=0}^{st-1}z^i.$$  It follows that $Q(S/I_t(L_t)^s, z)=\sum_{i=0}^{st-1}z^i$ and so $\mult(S/I_t(L_t)^s)=Q(S/I_t(L_t)^s,1)= st.$  This proves the result holds  when $n=t$. Now, assume $s>1$ and  $n>t$.  To begin, we write $n=\xi t+\theta$, where $\xi\ge 1$ and $0\le \theta<t$.
     There are two cases to consider.

Case  1:   $\theta> 0$. Consider the following three short exact sequences of graded $S$-modules:
     \begin{equation}\label{first} 0 \To\frac{S}{I_t(L_n)^s:u_{n-t+1}}[-t] \To\frac{S}{I_t(L_n)^s} \To\frac{S}{(I_t(L_n)^s,u_{n-t+1})} \To 0,\end{equation}
    \begin{align*}
    0 \To\frac{S}{I_t(L_{n-1})^{s}:u_{n-t+1}}[-t] \To\frac{S}{I_t(L_{n-1})^s} \To\frac{S}{(I_t(L_{n-1})^s,u_{n-t+1})} \To 0,
    \end{align*}
    and
    \begin{align*}0 \To\frac{S}{I_t(L_{n-1})^{s}:x_{n-t} u_{n-t+1}}[-1] \To\frac{S}{I_t(L_{n-1})^s:u_{n-t+1}}\\ \To\frac{S}{(I_t(L_{n-1})^s:u_{n-t+1},x_{n-t})} \To 0.\end{align*}

    We proceed to show that all nonzero graded modules appearing in the above three short exact sequences  share the same dimension, which is $(n-\xi)$. Firstly,  by Lemma~\ref{multpower1}, we have $I_t(L_n)^{s}:u_{n-t+1}=I_t(L_n)^{s-1}$,
      it follows from Lemma~\ref{height} that $$\dim \frac{S}{I_t(L_n)^{s}:u_{n-t+1}}=\dim \frac{S}{I_t(L_n)^{s}}=n-\xi.$$ Note that $(I_t(L_n)^{s},u_{n-t+1})=(I_t(L_{n-1})^{s},u_{n-t+1})$ and $\sqrt{(I_t(L_{n-1})^{s},u_{n-t+1})}=I_t(L_n)$,  we obtain  $$\dim \frac{S}{(I_t(L_n)^{s},u_{n-t+1})}=\dim \frac{S}{(I_t(L_{n-1})^{s},u_{n-t+1})}=n-\xi.$$
Since $$I_t(L_{n-1})^s:x_{n-t}u_{n-t+1}=I_t(L_{n-1})^s:u_{n-t}x_n=I_t(L_{n-1})^{s-1}:x_n=I_t(L_{n-1})^{s-1},$$ we have
      $$\dim \frac{S}{I_t(L_{n-1})^s:x_{n-t}u_{n-t+1}}=\dim \frac{S}{I_t(L_{n-1})^{s-1}}=n-\xi.$$
      
     If $n>2t$, by Lemma~\ref{multpower1}.(2), we have $(I_t(L_{n-1})^s:u_{n-t+1},x_{n-t})=(I_t(L_{n-t-1})^s,x_{n-t})$. Since $n-t-1=(\xi-1)t+\theta-1$ with $\theta-1\geq 0$,  it follows that $$\dim \frac{S}{(I_t(L_{n-1})^s:u_{n-t+1},x_{n-t})}=\dim \frac{S}{(I_t(L_{n-t-1})^s,x_{n-t})}=n-1-(\xi-1)=n-\xi.$$

 By Lemma~\ref{mult} and using the induction hypothesis, it follows that
    \begin{align*}
      & \mult (S/I_t(L_n)^s)= \mult (S/I_t(L_n)^{s-1})+\mult (S/I_t(L_{n-1})^s)
      \\&\qquad \qquad\qquad \qquad -\mult (S/I_t(L_{n-1})^{s-1})-\mult (S/I_t(L_{n-t-1})^s) \\
            =&\begin{pmatrix}s+\xi-2\\s-2\end{pmatrix}\begin{pmatrix}\xi+t-\theta-1\\ \xi\end{pmatrix}
      +\begin{pmatrix}s+\xi-1\\s-1\end{pmatrix}\begin{pmatrix}\xi+t-\theta\\ \xi\end{pmatrix}\\
      &~~~~-\begin{pmatrix}s+\xi-2\\s-2\end{pmatrix}\begin{pmatrix}\xi+t-\theta\\ \xi\end{pmatrix}
      -\begin{pmatrix}s+\xi-2\\s-1\end{pmatrix}\begin{pmatrix}\xi+t-\theta-1\\\xi-1\end{pmatrix}\\
            =&\begin{pmatrix}s+\xi-1\\s-1\end{pmatrix}\begin{pmatrix}\xi+t-\theta\\ \xi\end{pmatrix}-\begin{pmatrix}s+\xi-2\\s-2\end{pmatrix} \begin{pmatrix}\xi+t-\theta-1\\ \xi-1\end{pmatrix}
     \\ & -\begin{pmatrix}s+\xi-2\\s-1\end{pmatrix}\begin{pmatrix}\xi+t-\theta-1\\ \xi-1\end{pmatrix}\\
            =&\begin{pmatrix}s+\xi-1\\s-1\end{pmatrix}\begin{pmatrix}\xi+t-\theta\\ \xi\end{pmatrix}-\begin{pmatrix}s+\xi-1\\s-1\end{pmatrix}\begin{pmatrix}\xi+t-\theta-1\\ \xi-1\end{pmatrix}\\
      =&\begin{pmatrix}s+\xi-1\\s-1\end{pmatrix}\begin{pmatrix}\xi+t-\theta-1\\ \xi\end{pmatrix}.
    \end{align*}

     If $t<n\leq 2t$, then $\xi=1$ and so $$\dim \frac{S}{(I_t(L_{n-1})^s:u_{n-t+1},x_{n-t})}=\dim \frac{S}{(x_{n-t})}=n-\xi.$$
Since $\mult(S/(x_{n-t}))=1$, by applying the induction hypothesis together with Lemma~\ref{mult}, we can deduce that:
    \begin{align*}
      &\mult(S/I_t(L_n)^s)\\
      =&\mult(S/I_t(L_n)^{s-1})+\mult(S/I_t(L_{n-1})^s)-\mult(S/I_t(L_{n-1})^{s-1})-1\\
      =&\begin{pmatrix}s-1\\s-2\end{pmatrix}\begin{pmatrix}t-\theta \\1\end{pmatrix}
      +\begin{pmatrix}s\\s-1\end{pmatrix}\begin{pmatrix}t-\theta+1\\1\end{pmatrix}-\begin{pmatrix}s-1\\s-2\end{pmatrix}\begin{pmatrix}t-\theta+1\\1\end{pmatrix}-1\\
      =&\begin{pmatrix}s\\s-1\end{pmatrix}\begin{pmatrix}t-\theta+1\\1\end{pmatrix}-\begin{pmatrix}s-1\\s-2\end{pmatrix} \begin{pmatrix}t-\theta\\0\end{pmatrix}-\begin{pmatrix}s-1\\s-1\end{pmatrix} \begin{pmatrix}t-\theta\\0\end{pmatrix}\\
      =&\begin{pmatrix}s\\s-1\end{pmatrix}\begin{pmatrix}t-\theta+1\\1\end{pmatrix}-\begin{pmatrix}s\\s-1\end{pmatrix} \begin{pmatrix}t-\theta\\0\end{pmatrix}\\
      =&\begin{pmatrix}s\\s-1\end{pmatrix}\begin{pmatrix}t-\theta\\1\end{pmatrix}.
    \end{align*}

 \noindent Case 2:   $\theta=0$, i.e., $n=\xi t$. We may assume $\xi>1$, since the case that $\xi=1$  has been addressed. As in Lemma~\ref{multpower2},  we put for $i=1,\ldots, t$, $$A_i:=(I_t(L_{n-i})^s,x_{n-t+1}\cdots x_{n-i+1}) \mbox{\quad and \quad} B_i:=(A_i,x_{n-i+1}).$$  We must compute the dimensions of  these graded $S$-modules.   Fix $i\in \{1,\ldots,t\}$.  By Lemma~\ref{multpower2}, we see that $B_i=(I_t(L_{n-i})^s,x_{n-i+1})$. Since  we $n-i=(\xi-1)t+t-i$ with $0\leq t-i<t$, it follows from Lemma~\ref{height} that $\height(I_t(L_{n-i})^s)=\xi-1$ and so   $\height(B_i)=\height(I_t(L_{n-i})^s)+1=\xi$.  We next show that $\height(A_i)=\xi$. Since $\height (I_t(L_{n-i})^s)=\xi-1$, it follows that $\height(A_i)$ is either $\xi-1$ or $\xi$. Suppose on the contrary that $\height(A_i)=\xi-1$. Then there must exist a minimal prime ideal, say $\p$,  of $I_t(L_{n-i})^s$ with $\height(\p)=\xi-1$ that contains $A_i$. However, this can not be the case. To see why, we assume without loss the generality that  $\p=(x_{i_1},\ldots, x_{i_{\xi-1}})$ with $1\leq i_1< \cdots< i_{\xi-1}\leq n-i$. By Lemma~\ref{newass}, we have $(\xi-2)t+t-i+1\le i_{\xi-1}\le (\xi-1)t$.  This implies $A_i\nsubseteq \p$, leading to contradiction. Therefore, we conclude that $\height(A_i)=\xi$.  As a result, we have $$\dim\frac{S}{A_i}=\dim\frac{S}{B_i}=n-\xi \mbox{ for all } i=1,\ldots,t.$$

 Since $A_1=(I_t(L_{n-1})^{s}, u_{n-t+1})$, according to  Lemma~\ref{mult} and considering the short exact sequence (1), it follows that \begin{equation}\label{second}\mult(S/I_t(L_n)^s)=\mult(S/I_t(L_n)^{s-1})+\mult(S/A_1).\end{equation}

 On the other hand,  Lemma~\ref{multpower2} asserts the existence of the following $t-1$ short exact sequences:
     \begin{align*}
       & 0 \To\frac{S}{A_{2}}[-1] \To\frac{S}{A_1} \To\frac{S}{B_1} \To 0,\\
       & 0 \To\frac{S}{A_{3}}[-1] \To\frac{S}{A_2} \To\frac{S}{B_2} \To 0,\\
        & \qquad\qquad\qquad\qquad  \vdots\\
         &   0 \To\frac{S}{A_{t}}[-1] \To\frac{S}{A_{t-1}} \To\frac{S}{B_{t-1}} \To 0.
    \end{align*}
 In view of these short exact sequences together with (\ref{second}), we obtain
    \begin{align*}
      &\mult(S/I_t(L_n)^s)=\mult(S/I_t(L_n)^{s-1})+\mult(S/A_1)\\
      =&\mult(S/I_t(L_n)^{s-1})+\mult(S/A_2)+\mult(S/B_1)\\
      =&\mult(S/I_t(L_n)^{s-1})+\mult(S/A_3)+\mult(S/B_1)+\mult(S/B_2)\\
      =&\cdots\\
      =&\mult(S/I_t(L_n)^{s-1})+\mult(S/A_t)+\mathop{\sum}\limits_{i=1}^{t-1} \mult(S/B_i).
               \end{align*}
                       Since $A_t=B_t$, it follows that
        \begin{equation*} \mult(S/I_t(L_n)^s)=\mult(S/I_t(L_n)^{s-1})+\mathop{\sum}\limits_{i=1}^{t} \mult(S/B_i)\end{equation*}
          This implies
       \begin{align*}
      &\mult(S/I_t(L_n)^s)=\mult(S/I_t(L_n)^{s-1})+\mathop{\sum}\limits_{i=1}^t \mult(S/I_t(L_{n-i})^s)\\
      =&\begin{pmatrix}s+\xi-2\\s-2\end{pmatrix}\begin{pmatrix}\xi+t-1\\ \xi\end{pmatrix}
      +\begin{pmatrix}s+\xi-2\\s-1\end{pmatrix} \mathop{\sum}\limits_{i=1}^t\begin{pmatrix}\xi+i-2\\ \xi-1\end{pmatrix}\\
      =&\begin{pmatrix}s+\xi-2\\s-2\end{pmatrix}\begin{pmatrix}\xi+t-1\\ \xi\end{pmatrix}
      +\begin{pmatrix}s+\xi-2\\s-1\end{pmatrix}\begin{pmatrix}\xi+t-1\\ \xi\end{pmatrix}\\
      =&\begin{pmatrix}s+\xi-1\\s-1\end{pmatrix}\begin{pmatrix}\xi+t-1\\ \xi\end{pmatrix}.
    \end{align*} This completes the proof.
\end{proof}

\begin{Remark} \em According to \cite[Theorem 1.1]{HPV08}, $\mult (S/I_t(L_n)^s)$ is of  polynomial type of degree $\leq \xi$ in $s$. However, Theorem~\ref{main} reveals that  $\mult (S/I_t(L_n)^s)$ is a polynomial of degree $\xi$ in $s$. Consequently, Theorem~\ref{main} serves a valuable complement to \cite[Theorem 1.1]{HPV08}.
\end{Remark}

{\bf \noindent Acknowledgment:}
This research is supported by NSFC (No. 11971338).  We would express our sincere gratitude to the referee for his/her careful reading and numberous comments that improve the presentation of our paper greatly.


\begin{thebibliography}{9999}
\bibitem{AF18} A. Alilooee, S. Faridi, \textit{Graded Betti numbers of path ideals of cycles and lines}, J. Algebra Appl. \textbf{17}(2018), 1850011, 1--17.\par
\bibitem{BS21}  A. S. Balu, I. Sengupta,  \textit{Colon structure of associated primes of monomial ideals}, Preprint 2021, arXiv:2105.00835. \par
\bibitem{BBH17} A. Banerjee, S. K. Beyarslan and H. T. H\`{a}. \textit{Regularity of edge ideals and their powers}, Advances in Algebra, Springer Proceedings in Mathematics and Statistics \textbf{277}(2019), 17--52.\par
\bibitem{BC23} S. ${\rm B\breve{a}l\breve{a}nescu}$, M. $\textrm{Cimpoea\c{s}}$, \textit{Depth and Stanley depth of powers of the path ideal of a path graph}, Preprint 2023, arXiv:2303.01132.\par
\bibitem{BHO11} R. R. Bouchat, H. T. H\`{a} and A. O'Keefe, \textit{Path ideals of rooted trees and their graded Betti numbers}, J. Combin. Theory Ser. A \textbf{118}(2011), 2411--2425.\par
\bibitem{BHO12} R. R. Bouchat, H. T. H\`{a} and A. O'Keefe, \textit{Corrigendum to "Path ideals of rooted trees and their graded Betti numbers" [J. Combin. Theory Ser. A \textbf{118}(2011), 2411--2425]}, J. Combin. Theory Ser. A \textbf{119}(2012), 1610--1611.\par
\bibitem{Bro79} M. Brodmann, \textit{Asymptotic stability of $\mathrm{Ass}(M/I^nM)$}, Proc. Amer. Math. Soc. \textbf{74}(1979), 16--18.\par
\bibitem{BH98}  W. Bruns, J. Herzog, \textit{Cohen-Macaulay rings}, Cambridge University Press, 1998. \par
\bibitem{CD99} A. Conca, E. De Negri, \textit{M-sequences, graph ideals and ladder ideals of linear types}, J. Algebra \textbf{211}(1999), 599--624.\par
\bibitem{CHHT20} E. Carlini, H. T. H\`{a}, B. Harbourne and A. Van Tuyl, \textit{Ideals of Powers and Powers of Ideals: Intersecting Algebra, Geometry, and Combinatorics}, Lecture Notes of the Unione Matematica Italiana vol. 27. Springer, 2020.\par
\bibitem{Far04} S. Faridi, {\em  Simplicial trees are sequentially Cohen-Macaulay}, J. Pure Appl. Algebra \textbf{190}(2004), 121--136.\par
\bibitem{HH} J. Herzog, T. Hibi, \textit{Monomial Ideals}, Graduate Texts in Mathematics, vol. 260. Springer, 2011.\par
\bibitem{HM21} J. Herzog, S. Moradi, {\em Systems of parameters and the Cohen-Macaulay property}, J. Algebraic Combin. \textbf{54}(2021), 1261--1277.\par
\bibitem{HPV08} J. Herzog, T. J. Puthenpurakal and J. K. Verma, {\em Hilbert polynomials and powers of ideals.} Math. Proc. Cambridge Philos. Soc. \textbf{145}(2008), 623--642.\par
\bibitem{HV10}  J. He,  A. Van Tuyl, {\em Algebraic properties of the path ideal of a tree}, Comm. Algebra \textbf{38}(2010), 1725--1742.\par
\bibitem{HHTZ08} J. Herzog, T. Hibi, N. V. Trung and X. Zheng, {\em Standard graded vertex cover algebras, cycles and leaves}, Trans. Amer. Math. Soc. \textbf{360}(2008), 6231--6249.\par
\bibitem{KO14} M. Kubitzke, A. Olteanu, \textit{Algebraic properties of classes of path ideals of posets}, J. Pure Appl. Algebra \textbf{218}(2014), 1012--1033.\par
\bibitem{KS14} D. Kiani, S. Saeedi Madani, \textit{Betti numbers of path ideals of trees}, Comm. Algebra \textbf{44}(2016), 5376--5394.\par
\bibitem{NQ24}
M. Nasernejad, A.  A. Qureshi, {\em Algebraic implications of neighborhood
hypergraphs and their transversal hypergraphs}, Comm. Algebra \textbf{52}(2024), 2328--2345. \par
\bibitem{SL23} J. Shan, D. Lu, {\em Regularity of powers of path ideals of line graphs}, accepted by Comm. Algebra\par
\bibitem{Vil90} R. H. Villarreal, \textit{Cohen-Macaulay graphs}, Manuscripta Math. \textbf{66}(1990), 277--293.\par
\end{thebibliography}
\end{document}